\documentclass[12pt]{amsart}

\usepackage{amsfonts,amsmath,amssymb}

\newtheorem{Lem}{Lemma}[section]
\newtheorem{Prop}[Lem]{Proposition}
\newtheorem{Char}[Lem]{Characterization}
\newtheorem{Cor}[Lem]{Corollary}
\newtheorem{Thm}[Lem]{Theorem}

\theoremstyle{definition}

\newtheorem{Def}[Lem]{Definition}
\newtheorem{Rem}[Lem]{Remark}

\newtheorem{Expl}[Lem]{Example}

\newcommand\pf{\begin{proof}}
\newcommand\epf{\end{proof}}

\def\NZQ{\Bbb}               % the font for N,Z,Q,R,C
\def\NN{{\NZQ N}}

\def\ZZ{{\NZQ Z}}

\newcommand\Sym{{\operatorname{Sym}}}

\newcommand\supp{{\operatorname{supp}}}
\newcommand\reg{{\operatorname{reg}}}
\newcommand\depth{{\operatorname{depth}}}
\newcommand\proj{{\operatorname{proj}}}

\newcommand\e{{\operatorname{e}}}
\newcommand\m{{\operatorname{m}}}

\textwidth=15cm \textheight=23cm \topmargin=0.5cm
\oddsidemargin=0.5 cm \evensidemargin= 0.5 cm \pagestyle{plain}

\numberwithin{equation}{section}

\title{Algebraic properties of universal squarefree lexsegment ideals $ ^1$}

\author{Marilena Crupi
}
\address{University of Messina, Department of Mathematics and Computer Science\\
Viale Ferdinando Stagno d'Alcontres, 31 \\
98166 Messina, Italy} \email{mcrupi@unime.it}
\author{Monica La Barbiera}

\address{University of Messina, Department of Mathematics and Computer Science\\
Viale Ferdinando Stagno d'Alcontres, 31\\
98166 Messina, Italy} \email{monicalb@unime.it}
\begin{document}

\subjclass[2000]{13A02, 13B25, 13C15, 13D08}
\keywords{Monomial ideals, squarefree lexicographic ideals, minimal resolutions, s-sequences, standard invariants}
%\date{\today}
\thanks{\textbf{$^1$ To appear in Algebra Colloquium}}
\maketitle

\begin{abstract}
Let $K$ be a field and let $A=K[X_1, \ldots, X_n]$ be the
polynomial ring in $X_1, \ldots, X_n$ with coefficients in the
field $K$. We study the universal squarefree lexsegment ideals. We put our attention on their combinatorics computing some invariants. Moreover we study the link between such special class of squarefree lexsegment ideals and the so called $s$-sequences.\\

\end{abstract}

\section*{Introduction}\label{sec0}
Let $K$ be a field and let $A=K[X_1, \ldots, X_n]$ be the
polynomial ring in $X_1, \ldots, X_n$ with coefficients in the
field $K$. Set $A_{[m]} = K[X_1, \cdots, X_n,X_{n+1},\cdots, X_{n+m}]$, where $m$ is a positive integer.
A squarefree lexsegment ideal $I$ of $A$ is called \textit{universal squarefree lexsegment ideal}
(abbreviated USLI), if for any integer $m\geq 1$, the squarefree monomial ideal $IA_{[m]}$
of the polynomial ring $A_{[m]}$ is a squarefree lexsegment ideal.
 Let $M^s$ denote the set of all squarefree monomials in the variables $X_1, \cdots, X_n$.
 A squarefree lexsegment ideal $I$ of $A$
 with $G(I) = \{u_1, \ldots, u_{\ell}, u_{{\ell}+1}\}$, $u_1> \ldots> u_{\ell}> u_{{\ell}+1}$
 with respect to the homogeneous lexicographic order on $M^s$, is called
 \textit{almost universal squarefree lexsegment ideal} (abbreviated AUSLI),
 if $I$ is not an USLI of $A$ but the ideal $J = (u_1, \ldots, u_{\ell})$ is an USLI of $A$.
 $G(I)$ is the unique minimal set of monomial generators of the monomial ideal $I$. These definitions were introduced by Babson, Novik and Thomas in \cite{BNT} in order to study the simmetric version of algebraic shifting. The algebraic shifting is an algebraic operation introduced by Kalai (\cite{BK}, \cite{GK}) that transforms
  a simplicial complex into a simpler complex that preserves important combinatorial, topological and algebraic invariants.

In this paper we put our attention on the structure of universal squarefree lexsegment ideals (Characterization \ref{char}). We analyze their combinatorics in order to compute some invariants as the projective dimension, the Castelnuovo-Mumford regularity and the depth (Corollary \ref{inv}).

In \cite{HRT}, the authors introduced the concept of $s$-sequences
in order to study the symmetric algebra of a module $M$ on a
noetherian ring $R$. One of their motivation was that is a difficult
problem to compute standard algebraic invariants of the graded
algebra $\Sym_R(M)$. Their proposal was to determine these invariants
in terms of the corresponding invariants of
special quotients of the ring $R$. The $s$-sequences are an
important tool for this computation.

In this paper we analyze the problem when a squarefree lexsegment
ideal $I$ of degree $d$ of the polynomial ring $A$  is generated by
an $s$-sequence. We are able to state that this happens if $I$ is an
USLI or an AUSLI (Theorem \ref{T*}). Consequently their  symmetric algebra is studied (Theorems \ref{inv:USLI} and \ref{inv:ASLI}). \\
The structure of the paper is organized as follows.\\
In section \ref{sec1}, we recall some notions that we will use  during the
paper. \\
In section \ref{sec2}, we describe in a suitable way the USLIs (Characterization \ref{char}). Hence we state a characterization of an USLI of degree $d$ (Proposition \ref{samedeg}). Moreover, we analyze some invariants associated to the universal squarefree lexsegment ideals. The main result states that an USLI $I\varsubsetneq A$ has a unique extremal Betti number whose value is $1$ (Proposition \ref{exU}). This fact allows us to compute $\proj_A(I)$, $\reg_A(I)$ and $\depth_A(A/I)$ (Corollary \ref{inv}). \\
Section \ref{sec3} is dedicated to the symmetric algebra of USLIs and AUSLs
of the polynomial ring $A$. More precisely let $I$ be a lexsegment
ideal of $A$ generated by squarefree monomials in a same degree,
we establish that the ideal $I$ is generated by an $s$-sequence if
and only if $I$ is an USLI or an AUSLI (Theorem \ref{T*}). This result is proved using the characterization of
  the monomial $s$-sequences  by the Gr\"{o}bner bases. As a consequence of this result  we study the
problem  of computing standard algebraic invariants of the graded
algebra $\Sym_A(I)$ when $I$ is a squarefree lexsegment ideal
generated by an $s$-sequence. More precisely, we give a formula for
the dimension and the multiplicity of $\Sym_A(I)$ when $I$ is an
AUSLI (Theorem \ref{inv:ASLI}). A formula for the
Castelnuovo-Mumford regularity and for the depth of $\Sym_A(I)$ when
$I$ is an USLI (Theorem \ref{inv:USLI}) is also stated.

\section{Preliminaries and notations}\label{sec1}

Let $K$ be a field and let $A=K[X_1, \ldots, X_n]$ be the
polynomial ring in $X_1, \ldots, X_n$ with coefficients in the
field $K$. We consider $A$ as an $\NN$-graded ring and each
deg$X_i$ = $1$. We denote by $M_d$ the set of all monomials of
degree $d$ of the polynomial ring $A$. If $I\subsetneq A$ is a monomial ideal we denote by $G(I)$ the unique minimal set of monomial generators of $I$ and by $G(I)_d$ the set $G(I)_d = \{u \in G(I)\,:\, \deg u = d\}$, for $d>0$.

For a monomial $1 \neq u \in A$, we set
\[\supp(u) = \{i \,:\, X_i \,\, \textrm{divides} \,\, u\}.\]
\[\m(u) = \max\{i : X_i \,\, \textrm{divides}\,\, u\}.\]
%and
%\[m(u) = \min \{i : X_i \,\, \textrm{divides} \,\, u\}.\]
\par
Recall that a squarefree monomial ideal $I$ of $A$ is called
\textit{squarefree stable} if
for all $u\in G(I)$,
one has $(X_ju)/X_{\m(u)} \in I$ for all $j <\m(u)$
with $j \notin \supp(u)$
%A squarefree monomial ideal $I$ of $A$ is called
%\textit{squarefree strongly stable} if
%for all $u\in G(I)$,
%one has $(X_ju)/X_i \in I$ for all $i \in supp(u)$ and all $j < i$
%with $j \notin supp(u)$
(\cite{Ahh:square}).

%A monomial ideal $I$ of $A$ is \textit{lexsegment} if, for a monomial $u$ of $A$ belonging to $I$ and for a monomial $v$ of $A$ with $\deg u = \deg v$ and with $v>_{\textrm{lex}} u$, one has $v \in I$, where $>_{lex}$ denotes the lexicographic order on $S$.
Now let $M^s_d$ denote the set of all squarefree monomials of degree $d\geq 1$ in the variables $X_1, \cdots, X_n$.
We write $>_{\textrm{slex}}$ for the \textit{lexicographic order} on the finite set $M^s_d$, that is,
 if $u = X_{i_1}\cdots X_{i_d}$ and $v = X_{j_1}\cdots X_{j_d}$ are squarefree monomials  belonging
 to $M^s_d$ with $1 \leq i_1 < i_2 < \cdots < i_d \leq n$ and
 $1 \leq j_1 < j_2 < \cdots < j_d \leq n$, then $u>_{\textrm{slex}} v$
 if $i_1 = j_1$, $\ldots$, $i_{s-1} = j_{s-1}$ and $i_s < j_s$ for some $1 \leq s \leq d$ (\cite{Ahh:square}).\\
Let $M^s$ be the set of all squarefree monomials in the variables $X_1, \cdots, X_n$.
We denote by $>_{\textrm{hslex}}$ the \textit{homogeneous lexicographic order} on $M^s$, that is,
if $u,v \in M^s$, then $u >_{\textrm{hslex}} v$ if $\deg u > \deg v$,
or if $\deg u = \deg v$ and $u>_{\textrm{slex}} v$.\\
 A monomial ideal $I \subsetneq A$ is called a \textit{squarefree lexsegment ideal} if $I$ is generated by squarefree monomials, and
for all squarefree monomials $u \in I$ and all squarefree monomials $v\in A$  with $\deg u = \deg v$ and
$v >_{\textrm{slex}} u$, then $v \in I$. Every squarefree lexsegment ideal of $A$ is
obviously a squarefree stable ideal.
\par\medskip
Set $A_{[m]} = K[X_1, \cdots, X_n,X_{n+1},\cdots, X_{n+m}]$, where $m$ is a positive integer.

We quote the next definitions from \cite{BNT}.

%\begin{Def}  A lexsegment ideal $I$ of $A$ is called \textit{universal lexsegment ideal} (\textbf{ULI}, for short), if for any integer $m\geq 1$, the monomial ideal $IA_{[m]}$ of the polynomial ring $A_{[m]}$ is a lexsegment ideal.
%\end{Def}
%Similarly
\begin{Def}  A squarefree lexsegment ideal $I$ of $A$ is called \textit{universal squarefree lexsegment ideal} (USLI, for short), if for any integer $m\geq 1$, the squarefree monomial ideal $IA_{[m]}$ of the polynomial ring $A_{[m]}$ is a squarefree lexsegment ideal.
\end{Def}
In other words a universal squarefree lexsegment ideal of $A$ is a squarefree lexsegment ideal $I$ of $A$ which remains being squarefree lexsegment if we regard $I$ as an ideal of the polynomial ring $A_{[m]}$ for all $m \geq 1$.

%\begin{Rem}\rm A universal lexsegment ideal (resp. universal squarefree lexsegment ideal) of $A$ is a lexsegment ideal (resp. a squarefree lexsegment ideal) $I = (u_1, \cdots, u_t)$ of $A$ which remains being lexsegment (resp. squarefree lexsegment) if we regard $I$ as an ideal of the polynomial ring $A_{[m]}$ for all $m \geq 1$.
%\end{Rem}
\begin{Expl}\label{expls}
%(1) The ideal $I = (X_1^3, X_1^2X_2^2)$ is a lexsegment ideal of $A = k[X_1,X_2,X_3]$. $I$ is a ULI since $IA_{[m]}$ is a lexsegment ideal for all $m \geq 3$.
%
%(2) The ideal $I = (X_1^3, X_1^2X_2^2,X_1^2X_2X_3,X_1^2X_3^2)$ is a lexsegment ideal of $A = k[X_1,X_2,X_3]$. $I$ is not a ULI since $I$ is not a lexsegment ideal of the polynomial ring  $A = k[X_1,X_2,X_3,X_4]$. In fact $X_1^2X_2X_4 >_{\textrm{lex}} X_1^2X_3^2$ and $X_1^2X_2X_4\notin G(I)$.

(1) The squarefree lexsegment ideal $I = (X_1X_2, X_1X_3X_4)$ of $A = K[X_1,X_2,X_3,X_4]$ is an USLI. Indeed $I$ is a squarefree lexsegment ideal of the polynomial ring $A_{[m]}$ for all $m \geq 1$.\\
(2) The squarefree lexsegment ideal $I = (X_1X_2, X_1X_3X_4, X_2X_3X_4)$ of $A = K[X_1,X_2,$\newline $X_3,X_4]$ is not an USLI. Indeed $I$ is not a squarefree lexsegment ideal of the polynomial ring $A_{[1]} = K[X_1,X_2,X_3,X_4,X_5]$. In fact $X_1X_4X_5 >_{\textrm{slex}} X_2X_3X_4$ and
$X_1X_4X_5 \notin IA_{[1]}$.
\end{Expl}

\begin{Def}  A squarefree lexsegment ideal $I$ of $A$ with $G(I) = \{u_1, \ldots, u_\ell, u_{\ell+1}\}$, $u_1>_{\textrm{hslex}} \ldots>_{\textrm{hslex}} u_{\ell}>_{\textrm{hslex}} u_{{\ell}+1}$, is called \textit{almost universal squarefree lexsegment ideal} (AUSLI, for short), if $I$ is not an USLI of $A$ but the ideal $J = (u_1, \ldots, u_{\ell})$ is an USLI of $A$.
\end{Def}
\begin{Expl}\rm  The squarefree lexsegment ideal $I = (X_1X_2, X_1X_3X_4, X_2X_3X_4)$ of $A = k[X_1,X_2,X_3,X_4]$ is an AUSLI. Indeed $I$ is not an USLI of $A$, but the ideal $J = (X_1X_2, X_1X_3X_4)$ is an USLI of $A$ (Example \ref{expls}, (1)).
\end{Expl}
\par\medskip
We finish this section recall the notion of extremal Betti numbers of a graded ideal $I$ of the polynomial ring $A$.

If $I$ is a graded ideal of $A$, then $I$ has a
minimal graded free $A$-resolution
\[
F. : 0 \rightarrow F_s \rightarrow \cdots \rightarrow F_1
\rightarrow F_0 \rightarrow I \rightarrow 0
\]
where $F_i = \oplus_{j \in \ZZ}A(-j)^{\beta_{i,j}}$. \par The
integers $\beta_{i,j} = \beta_{i,j}(I) = \textrm{dim}_K
\textrm{Tor}_i(K, I)_j $ are called the graded Betti numbers of
$I$, while $\beta_i(I) = \sum_j \beta_{i,j}(I)$ are called the
total Betti numbers of $I$.

\par
To a graded ideal $I$ two invariants can be associated the
{\it projective dimension} and the {\it Castelnuovo-Mumford
regularity} (\cite{BH}, \cite{Eis}) that are defined, respectively,  as follows:
\[\proj_A(I) = \max\{i : \beta_i(I) \neq 0\},\]
\[
\begin{array}{lll}
\textrm{reg}_A(I) = \max\{j-i : \beta_{i, j}(I) \neq 0\}  = \max\{j
: \beta_{i,i+j}(I) \neq 0,\,\, \mbox{for some $i\in \NN$}\}.
\end{array}
\]

Bayer, Charalambous and Popescu introduced in \cite{BCP} a
refinement of the invariants above defined, giving the notion of
extremal Betti numbers.

\par
\begin{Def} \label{def:extr} A Betti number $\beta_{k,k+\ell}(I) \neq 0$ is called {\it
extremal} if $\beta_{i, i+j}(I) = 0$ for all $i \geq k$, $j \geq
\ell$, $(i, j) \neq (k, \ell)$.
\end{Def}
%The pair $(k, \ell)$ is called a corner.
\par\medskip
If $\beta_{k_1,k_1+\ell_1}(I), \dots,
\beta_{k_t,k_t+\ell_t}(I),\,\,\,k_1 > \dots > k_t, \ell_1 <\dots <
\ell_t$, are all extremal Betti numbers of $I$, then $k_1 =
\proj_A(I)$ and $\ell_t = \textrm{reg}_A(I)$.

The following characterization of the extremal Betti numbers of squarefree stable ideals was given in \cite[Proposition 4.1]{CU2}.
\begin{Prop} \label{pro:extr} Let $I\subsetneq A$ be a squarefree stable ideal. The following conditions are equivalent:
\begin{enumerate}
\item $\beta_{k,\,k+\ell}(I)$ is extremal.
\item $k+\ell = \max\{\m(u)\,:\, u \in G(I)_{\ell}\}$ and $\m(u) < k +j$ for all $j>\ell$ and $u \in G(I)_j$.
\end{enumerate}
\end{Prop}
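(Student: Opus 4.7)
The plan is to use the Aramova--Herzog--Hibi formula for the graded Betti numbers of a squarefree stable ideal, namely
\[
\beta_{i,\,i+j}(I)\;=\;\sum_{u\in G(I)_j}\binom{\m(u)-j}{i},
\]
and translate the condition of extremality into the vanishing of this closed-form expression in the appropriate region of the Betti table. Since the summands are nonnegative integers, vanishing of $\beta_{i,i+j}(I)$ is equivalent to $\m(u)<i+j$ for every $u\in G(I)_j$, and nonvanishing to the existence of some $u\in G(I)_j$ with $\m(u)\geq i+j$. This turns the whole statement into an elementary comparison of the numbers $\m(u)$ for $u\in G(I)$.

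To prove $(1)\Rightarrow(2)$, I would first extract from $\beta_{k,k+\ell}(I)\ne 0$ a generator $u\in G(I)_{\ell}$ with $\m(u)\geq k+\ell$, so that $M:=\max\{\m(u):u\in G(I)_{\ell}\}\geq k+\ell$. Taking $i=k+1$, $j=\ell$ in the extremality hypothesis forces $\beta_{k+1,\,k+1+\ell}(I)=0$, hence $\m(u)<k+1+\ell$ for all $u\in G(I)_{\ell}$; combined with the lower bound this gives $M=k+\ell$, the first half of (2). Next, for any $j>\ell$ and $u\in G(I)_{j}$ I would apply extremality at $(i,j)=(k,j)$: the vanishing of $\beta_{k,k+j}(I)$ forces $\binom{\m(u)-j}{k}=0$, i.e.\ $\m(u)<k+j$, which is the second half of (2).

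For $(2)\Rightarrow(1)$, the existence of some $u\in G(I)_{\ell}$ with $\m(u)=k+\ell$ gives $\binom{\m(u)-\ell}{k}=1$, so $\beta_{k,k+\ell}(I)\neq 0$. To verify extremality I split the region $\{(i,j):i\geq k,\; j\geq\ell,\;(i,j)\neq(k,\ell)\}$ into three cases. If $i=k$ and $j>\ell$, the bound $\m(u)<k+j=i+j$ from (2) kills every summand, so $\beta_{k,k+j}(I)=0$. If $i>k$ and $j=\ell$, the condition $\max\m(u)=k+\ell<i+\ell$ gives $\beta_{i,i+\ell}(I)=0$. Finally, if $i>k$ and $j>\ell$, then $i+j>k+j>\m(u)$ for every $u\in G(I)_{j}$ by (2), so $\beta_{i,i+j}(I)=0$ as well.

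The only genuine ingredient is the Aramova--Herzog--Hibi formula; once that is in hand, every step reduces to comparing integers and splitting the Betti-table quadrant above $(k,\ell)$ into the three cases above. The main thing to be careful about is handling the three subregions separately and, in particular, using the right test point $(k+1,\ell)$ to obtain the sharp equality $\max\{\m(u):u\in G(I)_{\ell}\}=k+\ell$ rather than just an inequality.
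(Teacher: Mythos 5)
Your proof is correct: the Aramova--Herzog--Hibi formula $\beta_{i,i+j}(I)=\sum_{u\in G(I)_j}\binom{\m(u)-j}{i}$ for squarefree stable ideals reduces everything to the integer comparisons you carry out, and your three-case check of the quadrant above $(k,\ell)$, together with the test point $(k+1,\ell)$ for the sharp equality, is complete. The paper itself gives no proof but only cites \cite[Proposition 4.1]{CU2}, and your argument is exactly the standard one underlying that reference.
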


As a consequence of the above result, we obtain the following.
\begin{Cor} \label{value} Let $I\subsetneq A$ be a squarefree stable
ideal.
\begin{enumerate}
\item If $\beta_{k, \, k+ \ell}(I)$ is an extremal Betti
number of $I$, then
\[\beta_{k,\,k+\ell}(I) = \vert \{u \in G(I)_{\ell}\,:\,\m(u) = k +\ell\}\vert.\]
\item Set $d = \max\{j\,:\, G(I)_j\neq \emptyset\}$ and $m = \max\{\m(u)\,:\,u\in G(I)\}$,
then $\beta_{m - d,\,m - d+d}(I)$ is the unique extremal Betti
number of $I$ if and only if $m=\max\{\m(u)\,:\,u\in G(I)_d\}$ and
for every $w\in G(I)_j$, $j<d$, $\m(w)<m$.
\end{enumerate}
\end{Cor}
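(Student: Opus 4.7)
The plan is to derive both parts from Proposition \ref{pro:extr} combined with the Aramova--Herzog--Hibi formula
\[\beta_{i,i+j}(I)=\sum_{u\in G(I)_j}\binom{\m(u)-j}{i}\]
for the graded Betti numbers of a squarefree stable ideal. Proposition \ref{pro:extr} fixes the combinatorial position of each extremal Betti number, and the formula then reads off its value.

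For part (1), Proposition \ref{pro:extr} guarantees that $k+\ell=\max\{\m(u):u\in G(I)_\ell\}$, so every $u\in G(I)_\ell$ satisfies $\m(u)\le k+\ell$. Substituting into the formula above, the binomial coefficient $\binom{\m(u)-\ell}{k}$ vanishes unless $\m(u)=k+\ell$, in which case it equals $\binom{k}{k}=1$; summing the surviving contributions yields the stated count.

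For part (2), I apply Proposition \ref{pro:extr} to the pair $(k,\ell)=(m-d,d)$. Since $d$ is the top degree of a minimal generator, the second condition of Proposition \ref{pro:extr} is vacuous, so $\beta_{m-d,m}(I)$ is extremal if and only if $m=\max\{\m(u):u\in G(I)_d\}$; this takes care of one of the two stated hypotheses. For the uniqueness equivalence I argue in both directions. In the forward direction, uniqueness of $\beta_{m-d,m}(I)$ implies $\proj_A(I)=m-d$; using the identity $\proj_A(I)=\max_{u\in G(I)}(\m(u)-\deg u)$, this forces $\m(w)\le m-d+j<m$ for every $w\in G(I)_j$ with $j<d$, exactly the second required condition. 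In the converse direction, assuming both hypotheses, any other extremal $\beta_{k,k+\ell}(I)$ must have $\ell<d$ (the cases $\ell=d$ and $\ell>d$ are ruled out immediately), and Proposition \ref{pro:extr} together with the hypotheses gives $k+\ell=\max\{\m(u):u\in G(I)_\ell\}<m$ as well as $k>m-d$, which must be combined to exclude such a position.

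The main obstacle is the uniqueness portion of (2): the two bare inequalities $k>m-d$ and $k+\ell<m$ leave a nonempty range of candidate pairs when $\ell<d$, so one must exploit the full strength of the extremality characterization in Proposition \ref{pro:extr}, namely the equality $k+\ell=\max\{\m(u):u\in G(I)_\ell\}$, together with the squarefree stable combinatorics of $G(I)$, to close out the argument.
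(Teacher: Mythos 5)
Your part (1) is correct and complete: Proposition \ref{pro:extr}(2) forces $\m(u)\le k+\ell$ for every $u\in G(I)_\ell$, and substituting into the Aramova--Herzog--Hibi formula $\beta_{i,i+j}(I)=\sum_{u\in G(I)_j}\binom{\m(u)-j}{i}$ leaves exactly one contribution of $\binom{k}{k}=1$ for each generator with $\m(u)=k+\ell$. The paper offers no argument at all for this corollary (it is presented as an immediate consequence of Proposition \ref{pro:extr}), so your route is the natural one; the forward implication in (2), via $\proj_A(I)=\max_{u\in G(I)}(\m(u)-\deg u)$, is also sound.

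The gap you flag in the converse of (2) is genuine and, in fact, cannot be closed, because that implication is false as stated. Take $A=K[X_1,\dots,X_6]$ and $I=(X_1X_2,X_1X_3,X_1X_4,X_1X_5,\,X_2X_3X_4X_5,\,X_2X_3X_4X_6)$. This ideal is squarefree stable, $d=4$, $m=6$ is attained in $G(I)_4$, $G(I)_3=\emptyset$, and every $w\in G(I)_2$ has $\m(w)\le 5<m$, so both hypotheses on the right-hand side of (2) hold. Yet Proposition \ref{pro:extr} shows that $\beta_{3,5}(I)$ is extremal in addition to $\beta_{2,6}(I)$: indeed $3+2=5=\max\{\m(u):u\in G(I)_2\}$ and $\m(u)\le 6<3+4$ for all $u\in G(I)_4$. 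So uniqueness fails. The hypothesis $\m(w)<m$ is necessary but too weak; the correct condition, which your own identity for $\proj_A(I)$ makes transparent, is $\m(w)\le m-d+j$ for every $w\in G(I)_j$ with $j<d$, i.e.\ $\proj_A(I)=m-d$. Under that hypothesis the position $(m-d,d)$ realizes simultaneously the largest homological degree and the largest twist among extremal positions, which forces $t=1$ in the chain $k_1>\dots>k_t$, $\ell_1<\dots<\ell_t$. No appeal to ``squarefree stable combinatorics'' will rescue the weaker version. The paper's only application of (2) is Proposition \ref{exU} for USLIs, where one checks $R_{d_i}-1-d_i\le R_{d_t}-1-d_t$ for all $i$, so the stronger condition does hold there and nothing downstream is affected.
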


%\begin{proof} The graded Betti numbers of a squarefree stable ideal $I$ are the following \cite[Corollary 1.5]{Ahh:square}:
%\[\beta_{i,i+j}(I) = \sum_{u\in G(I)_{j}}\binom {\m(u)-j}i,\]
%for all $i$ and $j$. Hence the assert follows from Proposition \ref{pro:extr}.
%\end{proof}
%Moreover:
%\begin{Cor} \label{unique} Let $I\subsetneq A$ be a squarefree stable
%ideal.\par Set \[d = \max\{j\,:\, G(I)_j\neq \emptyset\}\] and
%\[m = \max\{\m(u)\,:\,u\in G(I)\}.\]
%Then $\beta_{m - d,\,m - d+d}(I)$ is the unique extremal Betti number of $I$ if and only if $m=\max\{\m(u)\,:\,u\in G(I)_d\}$.
%\end{Cor}

\section{Universal squarefree lexsegment ideals}\label{sec2}
In this section we discuss the combinatorics of universal squarefree
lexsegment ideals. Moreover we compute some standard invariants.

In \cite[Definition 4.1]{BNT}   there is a
characterization of USLIs. In order to reformulate it for our
purpose, we need to introduce some notations.

For a sequence of non negative integers $(k_i)_{i\in \NN}$, we define the following set:
\[\supp (k_i)_{i\in \NN} = \{i \in \NN\,:\, k_i \neq 0\}.\]

If $\supp (k_i)_{i\in \NN} = \{d_1, \ldots, d_t\}$, with $d_1 < d_2 < \ldots < d_t$, then we associate to $(k_i)_{i\in \NN}$ the following integers:
\[R_j = j + \sum_{i=1}^j k_i\]
for $j= 1,\ldots, d_t$. We set $R_j = 0$, for $j >d_t$.

%If $\supp (k_i)_{i\in \NN}) = \{i_1, \ldots, i_t\}$, with $i_1 < i_2 < \ldots i_t$, then we set $R_j = 0$, for $i >i_t$.

Hence we can reformulate the characterization contained in \cite[Definition 4.1]{BNT}, as follows:

\begin{Char}\label{char} Let $I\subsetneq A$ be an ideal generated in degrees $d_ 1< d_2 < \ldots < d_t$. Then $I$ is an USLI of $A$ if and only if
\[\mbox{$G(I)_{d_i} = \left\{\left(\prod_{j=1}^{d_i-1}X_{R_j}\right)X_{\ell}\,:\, \ell \in [R_{d_i-1} +1, R_{d_i}-1]\right\}$, for \, $i=1, \ldots, t$,}\]
where $R_j = j + \sum_{i=1}^j \vert G(I)_{d_i}\vert$, for $j= 1,\ldots, d_t$.
\end{Char}
The characterization above follows from the statement contained in \cite[Definition 4.1]{BNT}, choosing $(k_i)_{i\in \NN}$ as the sequence of non negative integers such that $\supp (k_i)_{i\in \NN} = \{d_1, \ldots, d_t\}$ and $k_{d_i} = \vert G(I)_{d_i}\vert$, for $i = 1, \ldots, t$.
\begin{Rem} \label{seq} Assume that $(k_i)_{i\in \NN}$ is a sequence of non negative integers such that
\[\supp (k_i)_{i\in \NN} = \{d_1, \ldots, d_t\},\quad d_ 1< d_2 < \ldots < d_t.\]
%Suppose $d_ 1\leq d_2 \leq \ldots \leq d_t$.
Then there exists an USLI $I\subsetneq A= K[X_1, \ldots, X_n]$ generated in degrees $d_1, \ldots, d_t$ such that
$\vert G(I)_{d_i}\vert = k_{d_i}$, for $i = 1, \ldots, t$ if and only if $n \geq d_t + \sum_{i=1}^{d_t}k_i-1$.
% = d_t + \sum_{i=1}^{t}k_{d_i}-1$.
\end{Rem}

Thanks to the above statements we can give the following characterizations of an USLI generated in a same degree $d$.
\begin{Prop} \label{samedeg} Let $I$ be a squarefree lexsegment ideal of $A = K[X_1, \ldots, X_n]$ generated in degree $d$. Then $I$ is an USLI  of $A$ if and only if $\vert G(I)\vert\leq n-d+1$.
\end{Prop}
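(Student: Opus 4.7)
The plan is to specialize Characterization \ref{char} to the case $t=1$ (a single generating degree) and read off the condition on $|G(I)|$ directly. Set $s = |G(I)|$ and consider the sequence $(k_i)_{i\in \NN}$ with $k_d = s$ and $k_i = 0$ for $i\neq d$, so that $\supp (k_i)_{i\in \NN} = \{d\}$. Since $k_i$ vanishes for $i<d$, I would compute $R_j = j$ for $1 \le j \le d-1$ and $R_d = d + s$. Substituting into Characterization \ref{char} predicts that the USLI attached to these data has generating set
\[
\{X_1X_2\cdots X_{d-1}X_\ell : \ell = d, d+1, \ldots, d+s-1\},
\]
whose largest variable index is $d+s-1$.

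For the forward direction, if $I\subsetneq A$ is an USLI generated in degree $d$, then Characterization \ref{char} forces $G(I)$ to be exactly the set displayed above. Since all these generators must lie in $A = K[X_1,\ldots,X_n]$, one needs $d+s-1 \le n$, i.e.\ $s \le n-d+1$. (Alternatively, Remark \ref{seq} with $d_t = d$ and $\sum_i k_i = s$ gives the inequality $n \ge d+s-1$ at once.)

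For the converse, assume $s \le n-d+1$. Any squarefree lexsegment ideal of $A$ generated in a single degree $d$ is uniquely determined by its number of generators: it is precisely the initial segment of length $s$ of $M^s_d$ with respect to $>_{\textrm{slex}}$. I would check that this initial segment coincides with $\{X_1\cdots X_{d-1}X_\ell : \ell = d,\ldots, d+s-1\}$, all of whose members belong to $A$ by the hypothesis $s\le n-d+1$. This matches Characterization \ref{char} exactly, so $I$ is an USLI. The only nontrivial bookkeeping step is the computation $R_j = j$ for $j<d$ in the single-degree case, which makes the prefix $\prod_{j=1}^{d-1}X_{R_j}$ collapse to $X_1X_2\cdots X_{d-1}$; once this is verified the equivalence follows immediately from Characterization \ref{char}.
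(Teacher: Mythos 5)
Your proposal is correct and follows essentially the same route as the paper: the paper applies Remark \ref{seq} (itself a direct consequence of Characterization \ref{char}) with $\supp(k_i)=\{d\}$ and $k_d=|G(I)|$ to get $n\ge d+|G(I)|-1$, which is exactly the computation you carry out, just with the bookkeeping $R_j=j$ for $j<d$ made explicit. No gaps.
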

\begin{proof} Let $(k_i)_{i\in \NN}$ be the sequence of non negative integers such that $\supp (k_i)_{i\in \NN} = \{d\}$, with $k_{d} = \vert G(I)_{d}\vert = \vert G(I)\vert$. From Remark \ref{seq}, $I\subsetneq A$ is an USLI generated in degree $d$ if and only if $n \geq d + k_d -1 = d + \vert G(I)\vert-1$ that is  if and only if $\vert G(I)\vert\leq n-d+1$.

\end{proof}

%\begin{Prop} Let $I$ be a squarefree lexsegment ideal of $A$ generated in degree $d\geq 2$. Then $I$ is an USLI if and only if $\vert G(I)\vert\leq \binom {n-1}{d-1}$.
%\end{Prop}
%
%\begin{proof} It is a consequence of the fact that if $I$ is an USLI generated in degree $d\geq 2$, then $1\in \supp (u)$, for every $u \in G(I)$. Moreover $\binom {n-1}{d-1}$ is the number of all squarefree monomials in $n-1$ variables of degree $d-1$.
%\end{proof}
\par\medskip
Note that if $I\subsetneq A = K[X_1, \ldots, X_n]$ is an USLI generated in degree $d$ then
\begin{equation}\label{degree1}
    G(I) = \{X_1X_2\cdots X_{d-1}X_d, X_1X_2\cdots X_{d-1}X_{d+1}, \ldots, X_1X_2\cdots X_{d-1}X_k\},
\end{equation}
%\[G(I) = \{X_1X_2\cdots X_{d-1}X_d, X_1X_2\cdots X_{d-1}X_{d+1}, \ldots, X_1X_2\cdots X_{d-1}X_k\},\]
with $d \leq k \leq n$.

Moreover if $I$ is an AUSLI generated in degree $d$, then
\begin{equation}\label{degree2}
G(I) = \{X_1X_2\cdots X_{d-1}X_d, \ldots, X_1X_2\cdots X_{d-1}X_n,X_1X_2\cdots X_{d-2}X_dX_{d+1}\}.
  \end{equation}
\begin{Rem} \label{ASLI} \rm It is clear that a squarefree lexsegment ideal $I\varsubsetneq A$ generated in degree $d$ is an AUSLI if and only if $\vert G(I) \vert = n-d+2$.
\end{Rem}
\par\medskip
We finish this section computing some invariants of an USLI $I$ by its extremal Betti numbers. In general squarefree lexsegment ideals may have more than just one extremal Betti number (\cite{CU1},\cite{CU2}).\\
Consider, for example the squarefree ideal \[I=(X_1X_2,X_1X_3,X_1X_4,X_1X_5, X_1X_6,X_1X_7,X_2X_3X_4,X_2X_3X_5,X_2X_3X_6, X_2X_3X_7, \] \[X_2X_4X_5X_6,X_2X_4X_5X_7, X_3X_4X_5X_6X_7)\] of $K[X_1,\ldots, X_7]$. It is a squarefree lexsegment ideal with $\beta_{5,5+2}=1$, $\beta_{4,4+3}=1$, $\beta_{3,3+4}=1$, $\beta_{2,2+5}=1$ as extremal Betti numbers.\\ \\
For an USLI, we can state.
\begin{Prop} \label{exU} Let $I\subsetneq A$ be an USLI. Then $I$ has an unique extremal Betti number whose value is equal to $1$.
\end{Prop}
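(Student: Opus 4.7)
The plan is to combine the explicit description of generators of an USLI given in Characterization \ref{char} with the combinatorial criterion for extremal Betti numbers of a squarefree stable ideal provided by Corollary \ref{value}. Since every squarefree lexsegment ideal is squarefree stable, these results apply to any USLI.

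Assume $I\subsetneq A$ is an USLI generated in degrees $d_1<d_2<\cdots<d_t$, and let $R_j=j+\sum_{i=1}^{j}k_i$ with $k_{d_i}=\vert G(I)_{d_i}\vert$ (all other $k_i$ being zero), as in Characterization \ref{char}. The first key observation is that the sequence $(R_j)$ is strictly increasing, since $R_{j+1}-R_j=1+k_{j+1}\geq 1$. Applied to a generator
\[u=\Bigl(\prod_{j=1}^{d_i-1}X_{R_j}\Bigr)X_\ell, \qquad \ell\in[R_{d_i-1}+1,\,R_{d_i}-1],\]
this shows that the largest index dividing $u$ is $\ell$ itself, because $\ell\geq R_{d_i-1}+1>R_{d_i-1}$ dominates every prefix index $R_1,\dots,R_{d_i-1}$. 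Hence $\max\{\m(u):u\in G(I)_{d_i}\}=R_{d_i}-1$.

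Setting $d:=d_t$ and $m:=\max\{\m(u):u\in G(I)\}$, the monotonicity of $(R_j)$ forces $m=R_{d_t}-1$, attained only in the top degree; in particular, for every $w\in G(I)_{d_i}$ with $i<t$ one has $\m(w)\leq R_{d_i}-1<R_{d_t}-1=m$. The hypotheses of Corollary \ref{value}(2) are therefore met, so $\beta_{m-d,\,m}(I)$ is the unique extremal Betti number of $I$. To determine its value, I invoke Corollary \ref{value}(1): it equals $\vert\{u\in G(I)_{d_t}:\m(u)=R_{d_t}-1\}\vert$, and by the explicit description above the unique such generator is the one corresponding to $\ell=R_{d_t}-1$. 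Hence the value is $1$.

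The main (and essentially only) obstacle is bookkeeping the indices $R_j$ and recognizing that strict monotonicity of this sequence yields both uniqueness of the extremal Betti number (no lower-degree generator can reach the maximal $\m$-value) and a singleton count giving the value $1$. Once the role of the $R_j$ is identified, the statement reduces to a direct application of Corollary \ref{value}.
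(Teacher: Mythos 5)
Your proof is correct and follows essentially the same route as the paper: read off $\max\{\m(u):u\in G(I)_{d_t}\}=R_{d_t}-1$ from Characterization \ref{char}, note that this is the overall maximum of $\m$ over $G(I)$, and apply Corollary \ref{value} to get uniqueness and the value $1$. You simply supply more of the bookkeeping (monotonicity of the $R_j$ and the singleton count) that the paper leaves implicit.
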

\begin{proof} Let $I$ be an USLI generated in degrees $d_ 1< d_2 < \ldots < d_t$. From Theorem \ref{char}, $\ell = R_{d_t} -1 = \max\{\m(u)\,:\,u\in G(I)_{d_t}\}$, where $d_t = \max\{j\,:\, G(I)_j\neq \emptyset\}$ and $\ell = \max\{\m(u)\,:\,u\in G(I)\}$. Hence, from Corollary \ref{value}, $\beta_{\ell - d_t,\,\ell - d_t+d_t}(I)$ is its unique extremal Betti number and its value is $1$.

%Moreover, from Corollary \ref{value}, $\beta_{\ell - d_t,\,\ell - d_t+d_t}(I)=1$.
%, since there exists only a monomial $u$ of degree $\ell$ such that $\m(u) =\ell$.
\end{proof}
\begin{Cor}\label{inv} Let $I\subsetneq A$ be an USLI generated in degrees $d_ 1< d_2 < \ldots < d_t$. Then
\begin{enumerate}
\item $\proj_A(I) = \vert G(I)\vert -1$ and $\reg_A(I) = d_t$.
\item $\depth_A(A/I) = n - \vert G(I) \vert$.
\end{enumerate}
\end{Cor}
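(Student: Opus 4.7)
The plan is to derive everything directly from the unique extremal Betti number identified in Proposition \ref{exU} together with Characterization \ref{char} and the Auslander-Buchsbaum formula.

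First I would unpack the information from Proposition \ref{exU}. Its proof identifies the unique extremal Betti number of the USLI $I$ as $\beta_{\ell-d_t,\,\ell-d_t+d_t}(I)$ where $\ell=\max\{\m(u):u\in G(I)\}=R_{d_t}-1$ and $d_t=\max\{j:G(I)_j\neq\emptyset\}$. Since the extremal Betti numbers of $I$ include the one realizing $\proj_A(I)$ and the one realizing $\reg_A(I)$, and here there is only one such number, I immediately read off $\proj_A(I)=\ell-d_t$ and $\reg_A(I)=d_t$.

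Next I would compute $\ell-d_t$ explicitly via Characterization \ref{char}. Taking $(k_i)_{i\in\NN}$ with $k_{d_i}=|G(I)_{d_i}|$ and $k_j=0$ otherwise, one has
\[
\ell=R_{d_t}-1=d_t+\sum_{i=1}^{d_t}k_i-1=d_t+\sum_{i=1}^{t}|G(I)_{d_i}|-1=d_t+|G(I)|-1,
\]
so that $\proj_A(I)=\ell-d_t=|G(I)|-1$, which proves part (1).

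For part (2) I would invoke the Auslander-Buchsbaum formula. Since $A$ is a polynomial ring, hence regular of depth $n$, and since $A/I$ has finite projective dimension, the formula yields
\[
\depth_A(A/I)=n-\proj_A(A/I).
\]
From the short exact sequence $0\to I\to A\to A/I\to 0$ one has $\proj_A(A/I)=\proj_A(I)+1=|G(I)|$, so $\depth_A(A/I)=n-|G(I)|$, as required. There is no real obstacle here; the whole corollary is a bookkeeping consequence of Proposition \ref{exU} combined with the explicit shape of the generators given in Characterization \ref{char}.
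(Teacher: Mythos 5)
Your proposal is correct and follows essentially the same route as the paper: read off $\proj_A(I)=\ell-d_t$ and $\reg_A(I)=d_t$ from the unique extremal Betti number of Proposition \ref{exU}, compute $\ell-d_t=\vert G(I)\vert-1$ via the $R_j$ from Characterization \ref{char}, and deduce (2) from the Auslander--Buchsbaum formula. The only difference is that you spell out the step $\proj_A(A/I)=\proj_A(I)+1$, which the paper leaves implicit.
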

\begin{proof} (1). From Proposition \ref{exU}, $\beta_{\ell - d_t,\,\ell - d_t+d_t}(I)$, with $\ell =\max\{\m(u)\,:\,u\in G(I)_{d_t}\}$, is the unique extremal Betti number of $I$. Hence $\ell-d_t = \proj_A(I)$ and $d_t =\reg_A(I)$. With the same notations of Characterization \ref{char}, we have that
\[\proj_A(I) = \ell-d_t =  d_t + \sum_{i=1}^{t}\vert G(I)_{d_i} \vert- d_t-1= \vert G(I)\vert-1.\]
(2). It follows from the Auslander-Buchsbaum formula.
\end{proof}

\begin{Rem}\rm Recall that for a squarefree stable ideal $I\varsubsetneq A$, $\reg_A(I) = \max\{\deg u: $\newline $u \in G(I)\}$ \cite[Corollary 2.6]{Ahh:square}.
\end{Rem}

%\begin{Cor} Let $I$ be an USLI of $A$ generated in degree $d$. If $\vert G(I)\vert= n-d+1$, then
%\[\depth_A (A/I) = d-1 = \reg_A (A/I) - 1.\]
%\end{Cor}
%\begin{proof} The assert follows from Corollary \ref{inv} and \cite[Corollary 2.6]{Ahh:square}. See also \cite[Proposition 1.3]{Ahh:square}.
%\end{proof}

%\begin{Rem}\rm With the same notations of Proposition \ref{exU}, proof. According to the definition of extremal Betti number (see Section 1), if $\beta_{\ell - d_t,\,\ell - d_t+d_t}(I)$ is the unique extremal Betti number of an USLI ideal $I\subsetneq A$, then $\ell-d_t = \proj_A(I)$ and $\ell =\reg(I)$. Indeed $\ell-d_t = R_{d_t} -1 -d_t = \vert G(I)\vert$ (see Corollary \ref{inv}) and $\ell = \max\{\deg(u)\,:\,u \in G(I)\} = \reg_A(I)$ (\cite{Ahh:square}, Corollary 2.6).
%\end{Rem}

\section{USLIs, AUSLIs and $s$-sequences}\label{sec3}
In this section we study the strict link between USLIs (resp. ASLIs)
and $s$-sequences. We compute standard algebraic invariants of the
graded algebra $Sym_A (I)$ when $I$ is an USLI or an AUSLI of degree
$d$ in terms of the annihilator ideals of the $s$-sequence that
generates $I$.

Let $A$ be a noetherian ring, $M$ be a finitely generated $A$-module
with generators $f_1, \ldots, f_q$.  For every $i = 1, \dots , q$,
we set $M_{i-1} = Af_1 + \cdots + Af_{i-1}$ and let $I_i =
M_{i-1}:_A f_i$ be the colon ideal.  We set $I_0 = (0)$. Since
$M_i/M_{i-1} \simeq A/I_i$, so $I_i$ is the annihilator of the
cyclic module $A/I_i$. $I_i$ is called an annihilator ideal of the
sequence $f_1, \dots, f_q$.\\
Let $Sym_A(M)$ be the symmetric algebra of $M$.
 Let $(a_{ij})$, for $i=1,\ldots,q$, $j=1,\ldots,p$,  be
the relation matrix of $M$.  It is known that the symmetric algebra
$Sym_A(M)$ has a presentation $A[T_1,\ldots,T_q]/J$, with
$J=(g_1,\ldots,g_p)$
where $g_j=\sum_{i=1}^q a_{ij}T_i$ for $j=1, \ldots, p$.\\
We consider $S=A[T_1, \dots, T_q]$ a graded ring by assigning to
each variable $T_i$ degree $1$ and to the elements of $A$ degree
$0$. Then $J$ is a graded ideal and the natural epimorphism
$S\rightarrow Sym_A(M)$ is a homomorphism of graded $A$-algebras.\\
Let $<$ be a monomial order on the monomials in the variables
$T_i$ such that $T_1 < T_2  < \cdots < T_q$. With
respect to this term order, for any polynomial  $f = \sum a_{\alpha}
\underline{T}^{\alpha} \in S$, where $\underline{T}^{\alpha} =
T_1^{\alpha_1} \cdots T_q^{\alpha_q} $ and $\alpha=(\alpha_1,\ldots,
\alpha_q) \in \NN^q$, we put $ \textit{in}_{<} (f) =
a_{\alpha}\underline{T}^{\alpha}$, where $\underline{T}^{\alpha}$
is the largest monomial in $f$ such that $a_{\alpha} \neq 0$.\\
So we can define the monomial ideal
$\textit{in}_{<}(J)=(\textit{in}_{<}(f)| f\in J)$. Notice
that $(I_1T_1, I_2 T_2, \dots, I_q T_q)  \subseteq
\textit{in}_{<}(J)$ and the two ideals coincide in degree $1$.

\begin{Def}
The generators  $f_1, \dots, f_q$ of $M$ are called an $s$-sequence
(with respect to an admissible term order $<$) if
\[
(I_1T_1, I_2 T_2, \dots, I_q T_q) = in_{<}(J).
\]
If $I_1 \subseteq  I_2 \subseteq \cdots \subseteq  I_q$, the
sequence is a strong $s$-sequence.
\end{Def}
%For more details on this subject see \cite{HRT}.

Now, let  $A = K[X_1,\dots, X_n]$ be the polynomial ring over a
field $K$ and let  $<$ any term order on
 $K[X_1,\dots, X_n;T_1,\dots, T_q]$ with $X_1  >
 \cdots > X_n$, $T_1 < T_2  < \cdots < T_q$, $X_i< T_j$ for all $i$ and  $j$. Then
 for any    Gr\"obner basis $G$ of  $J \varsubsetneq K[X_1, \dots, X_n; T_1,
\dots,T_q]$ with respect to $<$, we have $\textrm{in}_{<}(J)
=
 (\textrm{in}_{<} (f) |   f \in G)$. If the
elements of $G$ are of degree $1$ in the $T_i$, it follows that
$f_1, \dots, f_q$ is an $s$-sequence of $M$.

%\begin{Rem}(\cite{HRT})\label{Oss1}\\ \rm
Let   $f_1,\ldots,f_q$ be monomials of $A$. Set
$f_{ij}=\displaystyle{\frac{f_i}{[f_i,f_j]}}$ for $i \neq j$, where
$[f_i,f_j]$ is the greatest common divisor of the monomials $f_i$
and $f_j$. $J$ is generated by $g_{ij}=f_{ij}T_j - f_{ji}T_i$ for
$1\leq i< j \leq q$. The  monomial sequence $f_1,\ldots,f_q$ is an
$s$-sequence if and only if $g_{ij}$, for $1\leq i< j \leq q$, is a
Gr\"{o}bner basis for $J$ for any term order which extends an
admissible term order on the $T_i$ in
 $S= K[X_1,\dots, X_n;T_1,\dots, T_q]$.
Note that the annihilator ideals of the monomial sequence
$f_1,\ldots,f_q$ are the ideals $I_i=(f_{1i},f_{2i}, \ldots,
f_{i-1,i})$ for $i=1,\ldots, q$.
%\end{Rem}

\begin{Rem}
Let $I$ be an ideal of $A$ generated by an $s$-sequence $f_1,\dots,
f_q$ of monomials with respect to some admissible term order
$<$. From the theory of Gr\"obner bases,
%if $f_1,\dots, f_q$ is a
%monomial $s$-sequence with respect to some admissible term order
%$<$,
one has that  $f_1, \dots, f_q$ is an  $s$-sequence with respect to
any other admissible term order (\cite{HRT}, Lemma 1.2).
\end{Rem}

For more details on this subject see \cite{HRT}.

\medskip
%This section is dedicated to the symmetric algebra of a class of
%monomial modules over the polynomial ring $R=K[X_1,\ldots,X_n]$ that
%are ideals. More precisely let $I$ be a lexsegment ideal of $R$
%generated by square-free monomials in the same degree, we establish
%when  the ideal $I$ is generated by
%an $s$-sequence.\\
%From now to the end of the section we consider the polynomial ring
%$A$ ordered by the squarefree lexicographic order
%$>_{\textrm{hslex}}$ defined in Section 1. In order to simply the
%notations we will denote it by $>$.

Since the property to be an  $s$-sequence may depend on the order on
the sequence, if $I$ is a squarefree lexicographic ideal when we write \[I=(f_1,f_2, \ldots, f_q),\] we suppose
\[f_1>_{\textrm{hslex}} f_2 >_{\textrm{hslex}} \cdots >_{\textrm{hslex}} f_q.\]
%and  $X_1 >  X_2 >  \cdots >  X_n$.\\
In order to simply the
notations we will denote $>_{\textrm{hslex}}$ by $>$. \\ For any positive integer $q$ we set $[q] = \{1, \ldots, q\}$.

%Let $G(I)$ be the unique minimal set of monomial generators of $I$.
The following lemma will be crucial in the sequel.
\begin{Lem}\label{L1}
Let $I=(f_1,\ldots,f_q)\varsubsetneq A$ be a squarefree lexsegment
ideal generated in degree $d$ with $G(I) \varsubsetneq M^s_d$.

The following conditions are equivalent:
\begin{enumerate}
\item  $[f_{ij},
f_{h\ell}]=1$, for $i<j$, $h<\ell$, $i\neq h$, $j\neq \ell$,
$i,j,h,\ell \in [q]$.
\item $|G(I)|\leq n-d+2$.
\item $I$ is an USLI or $I$ is an ASLI of $A$.
\end{enumerate}
\end{Lem}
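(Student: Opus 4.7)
My plan splits the three-way equivalence into two halves. First, $(2) \Leftrightarrow (3)$ is essentially already in hand: Proposition \ref{samedeg} gives that $I$ is USLI iff $|G(I)|\leq n-d+1$, while Remark \ref{ASLI} gives that $I$ is AUSLI iff $|G(I)|=n-d+2$, so the disjunction in $(3)$ is equivalent to the single inequality in $(2)$. All the real work therefore goes into the equivalence $(1) \Leftrightarrow (2)$, which I would prove directly from the explicit description of $G(I)$.

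For $(3) \Rightarrow (1)$, I would substitute the generator formulas (\ref{degree1}) and (\ref{degree2}) into the definition $f_{ij} = f_i/[f_i,f_j]$. In the USLI case all generators share the prefix $X_1 \cdots X_{d-1}$, so $f_{ij} = X_{d+i-1}$ is a single variable depending only on $i$; thus for distinct index pairs with $i \neq h$ the resulting $f_{ij}, f_{h\ell}$ are different variables, hence coprime. In the AUSLI case the first $n-d+1$ generators are handled identically, and the extra generator $f_{n-d+2} = X_1 \cdots X_{d-2} X_d X_{d+1}$ forces a short case split: $f_{1,n-d+2}=f_{2,n-d+2}=X_{d-1}$, while $f_{i,n-d+2}=X_{d-1}X_{d+i-1}$ for $i \geq 3$. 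In each branch the coprimality condition against an arbitrary $f_{h\ell}$ from (\ref{degree1}) reduces to an inequality between single subscripts already provided by the hypothesis on the index pairs.

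For the contrapositive $\neg(2) \Rightarrow \neg(1)$, assume $|G(I)|\geq n-d+3$. The squarefree-lexsegment structure then forces $G(I)$ to contain the four monomials $f_2 = X_1\cdots X_{d-1}X_{d+1}$, $f_3 = X_1\cdots X_{d-1}X_{d+2}$, $f_{n-d+2} = X_1\cdots X_{d-2}X_dX_{d+1}$ and $f_{n-d+3} = X_1\cdots X_{d-2}X_dX_{d+2}$, provided $d+2 \leq n$. A direct gcd computation then yields $f_{2,n-d+2}=f_{3,n-d+3}=X_{d-1}$, so the two index pairs $(2,n-d+2)$ and $(3,n-d+3)$ witness a failure of (1). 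The boundary regime $d+2>n$ collapses the set $M^s_d$ so severely that the hypothesis $G(I)\varsubsetneq M^s_d$ combined with $|G(I)|\geq n-d+3$ leaves only a finite list of possibilities, which I would dispatch by looking lower in the lex tail and exhibiting a different witness pair there.

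The main technical obstacle is controlling the witness construction uniformly in both directions. In $(3) \Rightarrow (1)$ the issue is the non-uniform shape of $f_{i,n-d+2}$ in the AUSLI case, which forces a branching argument on the value of $i$. In the contrapositive the issue is that the "natural" witness $(2,n-d+2),(3,n-d+3)$ is valid only when $d+2 \leq n$, so the high-degree edge ($d \in \{n-1,n\}$) must be handled by a small separate calculation exploiting that in that range $|M^s_d|$ is small enough to enumerate the squarefree lexsegments and locate a shared factor among the $f_{ij}$'s directly. Once these boundary cases are isolated, the remaining gcd computations are entirely routine.
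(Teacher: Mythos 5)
Your overall route coincides with the paper's: the equivalence $(2)\Leftrightarrow(3)$ is read off from Proposition \ref{samedeg} and Remark \ref{ASLI}, the implication $(2)\Rightarrow(1)$ is the same explicit computation of the $f_{ij}$ from the descriptions (\ref{degree1}) and (\ref{degree2}), and the contrapositive $\neg(2)\Rightarrow\neg(1)$ is established by a two-pair witness; your witness $f_{2,n-d+2}=f_{3,n-d+3}=X_{d-1}$ is only a cosmetic variant of the paper's $f_{23}=f_{n-d+2,n-d+3}=X_{d+1}$. Up to that point the proposal is sound.

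The genuine gap is in the boundary case $d+2>n$ that you yourself isolate. You promise to dispatch it by ``looking lower in the lex tail and exhibiting a different witness pair there,'' but no such witness exists. When $d=n-1$, every squarefree monomial of degree $d$ has the form $\left(\prod_{k=1}^{n}X_k\right)/X_m$, and for a lexsegment one gets $f_i=\left(\prod_{k=1}^{n}X_k\right)/X_{n-i+1}$, hence $f_{ij}=X_{n-j+1}$, which depends only on $j$. Therefore $[f_{ij},f_{h\ell}]=1$ whenever $j\neq\ell$, i.e.\ condition (1) holds automatically for \emph{every} lexsegment in degree $n-1$, while $|G(I)|$ can be as large as $n-1$, exceeding $n-d+2=3$ once $n\geq 5$. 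Concretely, $I=(X_1X_2X_3X_4,\,X_1X_2X_3X_5,\,X_1X_2X_4X_5,\,X_1X_3X_4X_5)\varsubsetneq K[X_1,\ldots,X_5]$ satisfies all hypotheses of the lemma and condition (1), yet $|G(I)|=4>3$. So in this regime the implication $(1)\Rightarrow(2)$ is not provable by any witness hunt, and your proposed repair step fails. (You are in good company: the paper's own proof asserts $X_1\cdots X_{d-2}X_dX_{d+2}\in G(I)$ without noting that this requires $d+2\leq n$, so it silently skips exactly the same case; but flagging the case and then asserting it can be dispatched does not close the gap.)
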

\begin{proof} From Corollary \ref{samedeg} and Remark \ref{ASLI}, conditions (2) and (3) are equivalent, then we have only to prove that (1)$\Leftrightarrow$(2).\\
(1)$\Rightarrow$(2). Note that the monomial generators $f_i$ of $I$
are described by (\ref{degree2}), for $i=1, \ldots, n-d+2$. Suppose
$|G(I)|>n-d+2$. Since $I$ is a squarefree lexsegment ideal of degree
$d$, then $X_1X_2\cdots
X_{d-2}X_dX_{d+2}\in G(I)$. \\
From (\ref{degree2}), set $t = n-d+2$ and $t' = t+1$, then $f_t
=X_1X_2\cdots X_{d-2}X_dX_{d+1}$ and
$f_{t'} =  X_1X_2\cdots X_{d-2}X_dX_{d+2}$. Hence $f_{tt'} = X_{d+1}$. Again from (\ref{degree2}), $f_{23} =  X_{d+1}$. Hence $[f_{23}, f_{tt'}] = X_{d+1}$. A contradiction. \\
(2)$\Rightarrow$(1). Let $|G(I)|= q \leq n-d+2$ and $I =(f_1, \ldots, f_q)$.\\
%We prove that $[f_{ij}, f_{hl}]=1$ for $i<j$, $h<l$, $i\neq h$,
%$j\neq l$
%with  $i,j,h,l \in \{1,\ldots, q \}$. \\
If  $q < n-d+2$, then from (\ref{degree1}):
\[G(I) = \{X_1X_2\cdots
X_{d-1}X_d, X_1X_2\cdots X_{d-1}X_{d+1},\ldots, X_1X_2\cdots
X_{d-1}X_k\},\] with $d \leq k \leq n$. Hence:
\[f_{12}=X_{d},\,\,
f_{13}=X_{d},\,\, \ldots , \,\,f_{1q}=X_{d},\]
\[f_{23}=X_{d+1},\,\, \ldots,\,\,
f_{2q}=X_{d+1},\]
%\[f_{12}=X_{d},\,\,
%f_{13}=X_{d},\,\, \ldots , \,\,f_{1,n-d+1}=X_{d},\,\,
%f_{1,n-d+2}=X_{d-1},\]
%\[f_{23}=X_{d+1},\,\, \ldots,\,\,
%f_{2,n-d+1}=X_{d+1},\,\, f_{2,n-d+2}=X_{d-1},\]
and so on.\\ By the structure of $G(I)$, this computation implies
that $f_{ij} \neq f_{h\ell}$ for $i\neq h$ and $j\neq \ell$. Hence
$[f_{ij}, f_{h\ell}]=1$ for $i<j$, $h<\ell$, $i\neq h$, $j\neq \ell$
and $i,j,h,\ell \in
[q]$. \\

If $q = n-d+2$, then from (\ref{degree2}):
\[G(I) = \{X_1X_2\cdots
X_{d-1}X_d, \ldots,
 X_1X_2\cdots X_{d-1}X_n, X_1X_2\cdots X_{d-2}X_dX_{d+1}\}.\] We have:
 \[ f_{12}=X_{d},\,\,
f_{13}=X_{d},\,\, \ldots, \,\,f_{1,n-d+1}=X_{d},\,\,
f_{1,n-d+2}=X_{d-1},\] \[f_{23}=X_{d+1},\,\, \ldots,\,\,
f_{2,n-d+1}=X_{d+1},\,\, f_{2,n-d+2}=X_{d-1}, \ldots,\,\,\]
\[
f_{1q}=X_{d-1}, \,\,f_{2q}=X_{d-1}, \,\,f_{3q}=X_{d-1}X_{d+2},\,\,
\ldots, \,\,f_{q-1,q}=X_{d-1}X_{n}.\] Hence $[f_{ij},f_{h\ell}]=1$
for $i<j$, $h<\ell$, $i\neq h$, $j\neq \ell$ and $i,j,h,\ell \in
[q]$.

%(Let  $G(I)= \{f_1,\ldots,f_q \}$.
% Set $f_{ij}=\frac{f_i}{[f_i,f_j]}$ for $i < j$, we have
%$f_{ij}= X_{i_1}\cdots X_{i_{r}}$ with $i_1< \cdots < i_r$ and
%$r\leq d$. For $f_h$, $f_l$ with   $h<l$, let $f_{hl}= X_{h_1}\cdots
%X_{h_{p}}$ with $h_1< \cdots < h_p$, $p \leq d$. By the hypothesis
%$[f_{ij}, f_{hl}]=1$ for $i<j$, $h<l$, $i\neq h$, $j\neq l$ with
%$i,j,h,l \in \{1,\ldots,q \}$, it follows that $X_{i_j} \neq
%X_{h_t}$ for all $j=1,\ldots, r$ and $t=1,\ldots,p$. This means that
%there are not $f_h, f_l \in G(I)$ such that $f_{hl}$ contains the
%variables $X_{i_1},\ldots, X_{i_{r}}$ (that are in $f_{ij}$). It
%follows that if a variable of the monomial $f_{ij}$ is in the
%monomial $f_{h}$, with $h\neq i$, then such variable belongs to any
%other generator $f_l$, for all $l>h$ and $l \neq j$. Being $I$ be a
%square free lexsegment ideal generated in the same degree $d$ with
%$G(I) \neq M_d$, we can deduce the structure of $G(I)$ satisfying
%the hypotheses of the Lemma:\\
%$ G(I) = \{X_1X_2\cdots X_{d-1}X_d, X_1X_2\cdots X_{d-1}X_{d+1},
%\ldots, X_1X_2\cdots X_{d-1}X_k\}$, with $k \leq n$;\\
%$G(I) = \{X_1X_2\cdots X_{d-1}X_d, X_1X_2\cdots X_{d-1}X_{d+1},
%\ldots, X_1X_2\cdots X_{d-1}X_n,    X_1X_2\cdots
%X_{d-2}X_dX_{d+1}\}.$\\
%Hence $|G(I)| \leq n-d+2$.
\end{proof}

\begin{Thm}\label{T*}
Let $I\varsubsetneq A$ be a squarefree lexsegment ideal generated in
degree $d$ with $G(I) \varsubsetneq M^s_d$. Then $I$ is generated by
an $s$-sequence if and only if $|G(I)| \leq n-d+2$.
\end{Thm}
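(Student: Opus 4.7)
The plan is to invoke the Gr\"obner-basis reformulation of the $s$-sequence property recalled in Section \ref{sec3}: the monomial sequence $f_1, \ldots, f_q$ is an $s$-sequence if and only if the binomials $g_{ij} = f_{ij}T_j - f_{ji}T_i$, $1 \le i < j \le q$, form a Gr\"obner basis of $J$ with respect to the fixed admissible term order. Lemma \ref{L1} already converts the numerical bound $|G(I)| \le n-d+2$ into the combinatorial condition $[f_{ij}, f_{h\ell}] = 1$ for all $i<j$, $h<\ell$ with $i \ne h$ and $j \ne \ell$, so both implications of the theorem reduce to running Buchberger's criterion on the family $\{g_{ij}\}$.

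For the implication $(\Leftarrow)$, I would analyze the $S$-polynomials $S(g_{ij}, g_{h\ell})$ case by case. When $i \ne h$ and $j \ne \ell$, Lemma \ref{L1} gives $[f_{ij}, f_{h\ell}] = 1$, so the leading monomials $f_{ij}T_j$ and $f_{h\ell}T_\ell$ are coprime in $S$ and $S(g_{ij}, g_{h\ell})$ reduces to zero by the standard coprime-leading-term criterion. In the two remaining subcases, $j = \ell$ with $i \ne h$ and $i = h$ with $j \ne \ell$, one reads off from the explicit formulas for the $f_{ab}$ displayed in the proof of Lemma \ref{L1} (for both the USLI shape (\ref{degree1}) and the AUSLI shape (\ref{degree2}) of $G(I)$) that, up to a monomial factor, $S(g_{ij}, g_{h\ell})$ equals $g_{ih}$ or $g_{j\ell}$, so one further reduction step disposes of it. Thus $\{g_{ij}\}$ is a Gr\"obner basis of $J$ and $f_1, \ldots, f_q$ is an $s$-sequence.

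For the implication $(\Rightarrow)$, I would argue the contrapositive. Assume $|G(I)| \ge n-d+3$. Since $I$ is a squarefree lexsegment ideal in degree $d$, $G(I)$ consists of the largest $|G(I)|$ squarefree monomials of degree $d$ in the lex order, so by (\ref{degree1})--(\ref{degree2}) it contains in particular $f_2 = X_1\cdots X_{d-1}X_{d+1}$, $f_3 = X_1\cdots X_{d-1}X_{d+2}$, $f_t = X_1\cdots X_{d-2}X_dX_{d+1}$ and $f_{t'} = X_1\cdots X_{d-2}X_dX_{d+2}$, where $t = n-d+2$ and $t' = t+1$. A direct computation, parallel to the one in the proof of Lemma \ref{L1}, yields $f_{23} = f_{tt'} = X_{d+1}$ and $f_{32} = f_{t't} = X_{d+2}$, so
\[
g_{23} = X_{d+1}T_3 - X_{d+2}T_2, \qquad g_{tt'} = X_{d+1}T_{t'} - X_{d+2}T_t,
\]
and
\[
S(g_{23}, g_{tt'}) = T_{t'}g_{23} - T_3 g_{tt'} = X_{d+2}(T_3T_t - T_2T_{t'}) \in J
\]
has leading monomial $X_{d+2}T_2T_{t'}$. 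The decisive step is to check that this monomial is not divisible by any $f_{ij}T_j$: the only binomial with leading $T$-variable $T_2$ is $g_{12}$, whose $X$-part is $X_d$ and does not divide $X_{d+2}$, and by the explicit values of the $f_{i,t'}$ for $1 \le i < t'$ none of the corresponding $X$-parts divides $X_{d+2}$ either. Hence $\mathrm{in}_{<}(J) \supsetneq (I_1T_1, \ldots, I_qT_q)$ and $f_1, \ldots, f_q$ is not an $s$-sequence.

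The main obstacle is precisely this last verification: once the offending $S$-polynomial has been located, one must inventory the binomials with leading $T$-variable $T_2$ or $T_{t'}$ and test their $X$-parts against $X_{d+2}$ to rule out that $X_{d+2}T_2T_{t'}$ lies in $(f_{ij}T_j)$. The forward direction, by contrast, is essentially a case-split application of Buchberger's criterion and is the smoother half of the argument.
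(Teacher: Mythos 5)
Your proposal is correct and its two halves relate differently to the paper's proof. The backward implication is essentially the paper's: both routes pass through Lemma \ref{L1} to get $[f_{ij},f_{h\ell}]=1$ for $i\neq h$, $j\neq\ell$, and then conclude via Buchberger's criterion --- the paper simply cites \cite{HRT} (Proposition 1.7) at this point, whereas you sketch the $S$-polynomial reductions by hand (the coprime-leading-term case plus the $i=h$ or $j=\ell$ cases, where indeed $S(g_{ij},g_{hj})=f_{j\,\cdot}\,g_{ih}$ up to a monomial factor in these explicit families). The forward implication is where you genuinely diverge. The paper proves, by a fairly long analysis of standard expressions of $S(g_{ij},g_{h\ell})$, the general fact that the $s$-sequence property forces $[f_{ij},f_{h\ell}]=1$ for all $i<j$, $h<\ell$, $i\neq h$, $j\neq\ell$, and only then invokes Lemma \ref{L1} (1)$\Rightarrow$(2) to bound $|G(I)|$. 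You instead argue the contrapositive with a single explicit witness: when $|G(I)|\geq n-d+3$ you locate $f_2,f_3,f_t,f_{t'}$, compute $S(g_{23},g_{tt'})=X_{d+2}(T_3T_t-T_2T_{t'})\in J$, and check that its initial monomial avoids $(I_1T_1,\ldots,I_qT_q)$; this transplants the witness from the paper's own proof of Lemma \ref{L1} (1)$\Rightarrow$(2) (where $f_{23}=f_{tt'}=X_{d+1}$ produces the contradiction) directly into the Gr\"obner computation, bypassing the paper's general ``$s$-sequence implies coprimality'' argument. What you gain is a shorter, more concrete proof; what you lose is the general structural statement, and you incur the small extra obligations of (i) justifying that the leading term is $X_{d+2}T_2T_{t'}$ rather than $X_{d+2}T_3T_t$ for the chosen order (or checking both monomials, or appealing to the order-independence in \cite{HRT}, Lemma 1.2, which the paper records), and (ii) the divisibility inventory over all $f_{i,2}$ and $f_{i,t'}$, which you correctly identify as the decisive step and which does check out. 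Note that your witness requires $X_{d+2}$ to exist, i.e.\ $n\geq d+2$; this is exactly the same implicit assumption made in the paper's proof of Lemma \ref{L1}, so it is not a point of difference between the two arguments.
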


\begin{proof} Let $I=(f_1,f_2, \ldots, f_q)$ be a squarefree lexsegment ideal  and
suppose that $f_1,f_2, \ldots, f_q$ is an  $s$-sequence. We prove
that  $[f_{ij}, f_{h\ell}]=1$, for $i<j$, $h<\ell$, $i\neq h$,
$j\neq \ell$, with
$i,j,h,\ell \in [q]$. \\
The $s$-sequence property implies that $G=\{g_{ij}=f_{ij}T_j -
f_{ji}T_i \mid 1\leq i < j \leq q \}$ is a Gr\"{o}bner basis for
$J$. In particular, $S(g_{ij}, g_{h\ell})$ has a standard expression
with respect $G$ with remainder $0$. Note that, to get a standard
expression of $S(g_{ij}, g_{h\ell})$ is equivalent to find some
$g_{st} \in G$ whose initial term divides the initial term of
$S(g_{ij}, g_{h\ell})$ and substitute a multiple of $g_{st}$ such
that the remaindered polynomial has a  smaller initial term and so
on up to the remainder is $0$. We have:
$$
S(g_{ij}, g_{h\ell})= \frac{f_{ij}f_{\ell
h}}{[f_{ij},f_{h\ell}]}T_jT_h -
\frac{f_{h\ell}f_{ji}}{[f_{ij},f_{h\ell}]}T_iT_\ell.$$ First observe
that $[f_{ij},f_{j\ell}]=1$ as $f_1, \ldots, f_q$ are squarefree
monomials. \\ Now we consider the other cases. Suppose that $i<j$,
$h<\ell$, $i\neq h$, $j\neq \ell$. As $S(g_{ij}, g_{h\ell})$ has a
standard expression with respect $G$ there exists $g_{st}$ such that
$in_<(g_{st})$ divides
$in_< (S(g_{ij}, g_{h\ell}))$.\\
We distinguish two cases: $\ell>j$, $\ell<j$.\\
Let $\ell>j$, then $in_<(g_{st}) \mid
\frac{f_{h\ell}f_{ji}}{[f_{ij},f_{h\ell}]}$.\\
If $f_{h\ell} \mid \frac{f_{h\ell}f_{ji}}{[f_{ij},f_{h\ell}]}$, then
$[f_{ij},f_{h\ell}] \mid f_{ji}$. But, since $[[f_{ij},f_{h\ell}],
f_{ji}]=1$, it follows $[f_{ij},f_{h\ell}]=1$.\\
Consider $f_{s\ell} \mid
\frac{f_{h\ell}f_{ji}}{[f_{ij},f_{h\ell}]}$, where
$f_{s\ell}=in_<(g_{s\ell})$ with $s<j$ and $s<h$. We can write:
$$
S(g_{ij}, g_{h\ell})= -
\frac{f_{ji}f_{h\ell}}{f_{s\ell}[f_{ij},f_{h\ell}]}g_{s\ell} T_i+
\frac{f_{ij}f_{\ell h}}{[f_{ij},f_{h\ell}]}T_jT_h -
\frac{f_{ji}f_{h\ell}f_{\ell
s}}{f_{s\ell}[f_{ij},f_{h\ell}]}T_iT_s.$$ Hence $\frac{f_{ij}f_{\ell
h}}{[f_{ij},f_{h\ell}]}T_jT_h$ is divided by $f_{ij}$ and
consequently $[f_{ij},f_{h\ell}] \mid f_{\ell h}$. But, since
$[[f_{ij},f_{h\ell}],
f_{\ell h}]=1$, then it follows $[f_{ij},f_{h\ell}]=1$.\\
Let $\ell<j$, then $in_<(g_{st}) \mid
\frac{f_{ij}f_{\ell h}}{[f_{ij},f_{h\ell}]}$.\\
If $f_{ij} \mid \frac{f_{ij}f_{\ell h}}{[f_{ij},f_{h\ell}]}$, then
$[f_{ij},f_{h\ell}] \mid f_{\ell h}$. But, as we have
$[[f_{ij},f_{h\ell}],
f_{\ell h}]=1$, then it follows $[f_{ij},f_{h\ell}]=1$.\\
Consider $f_{sh} \mid \frac{f_{ij}f_{\ell h}}{[f_{ij},f_{h\ell}]}$,
where $f_{sh}=in_<(g_{sh})$. We can write:
$$
S(g_{ij}, g_{h\ell})= \frac{f_{ij}f_{\ell
h}}{f_{sh}[f_{ij},f_{h\ell}]}T_jg_{sh} -
\frac{f_{h\ell}f_{ji}}{[f_{ij},f_{h\ell}]}T_iT_{\ell} +
\frac{f_{ij}f_{\ell h}f_{hs}}{f_{sh}[f_{ij},f_{h\ell}]}T_jT_s.$$
Hence $\frac{f_{ij}f_{\ell
h}f_{hs}}{f_{sh}[f_{ij},f_{h\ell}]}T_jT_s$ is divided by $f_{ij}$.
Therefore $f_{sh}[f_{ij},f_{h\ell}] \mid f_{\ell h}f_{hs}$. But as
we have $[[f_{ij},f_{h\ell}], f_{\ell h}]=1$, then $f_{sh} \mid
f_{\ell h}$ and $[f_{ij},f_{h\ell}] \mid f_{hs}$. By the structure
of the monomials $f_1, \ldots, f_q$, if $[f_{ij},f_{h\ell}] \mid
f_{hs}$, with $s<h$, then $[f_{ij},f_{h\ell}]=1$.\\ Hence in any
case  we have $[f_{ij},f_{h\ell}]=1$ for $i<j$, $h<\ell$, $i\neq h$,
$j\neq \ell$, with  $i,j,h,\ell \in [q]$. It follows $|G(I)|=q
\leq n-d+2$ by Lemma \ref{L1}.\\
\\
Now suppose $|G(I)| \leq n-d+2$. Hence from Lemma \ref{L1},
$[f_{ij}, f_{h\ell}]=1$, for $i<j$, $h<\ell$, $i\neq h$, $j\neq
\ell$, $i,j,h,\ell \in [q]$ and the assert follows from \cite{HRT}
(Proposition 1.7).
%Conversely, let $|G(I)|= q \leq n-d+2$ and $I =(f_1, \ldots, f_q)$.
%We prove that $[f_{ij}, f_{hl}]=1$ for $i<j$, $h<l$, $i\neq h$,
%$j\neq l$
%with  $i,j,h,l \in \{1,\ldots, q \}$. \\
%If  $q < n-d+2$, then $I$ is an USLI and $G(I) = \{X_1X_2\cdots
%X_{d-1}X_d, X_1X_2\cdots X_{d-1}X_{d+1},$\\$\ldots, X_1X_2\cdots
%X_{d-1}X_k\}$, with $k \leq n$. We can  compute $f_{12}=X_{d}$,
%$f_{13}=X_{d}$, $\ldots$ , $f_{1,n-d+1}=X_{d}$,
%$f_{1,n-d+2}=X_{d-1}$; $f_{23}=X_{d+1}$, $\ldots$ ,
%$f_{2,n-d+1}=X_{d+1}$, $f_{2,n-d+2}=X_{d-1}$, and so on. By the
%structure of $G(I)$, this computation holds that $f_{ij} \neq
%f_{hl}$ for $i\neq h$ and $j\neq l$. Then we obtain $[f_{ij},
%f_{hl}]=1$ for $i<j$, $h<l$, $i\neq h$, $j\neq l$ with $i,j,h,l \in
%\{1,\ldots,q \}$. \\
%If $q = n-d+2$, then $I$ is an AUSLI and $G(I) = \{X_1X_2\cdots
%X_{d-1}X_d, X_1X_2\cdots X_{d-1}X_{d+1},$\\ $\ldots,
% X_1X_2\cdots X_{d-1}X_n, X_1X_2\cdots X_{d-2}X_dX_{d+1}\}$. We can  compute $f_{12}=X_{d}$,
%$f_{13}=X_{d}$, $\ldots$ , $f_{1,n-d+1}=X_{d}$,
%$f_{1,n-d+2}=X_{d-1}$; $f_{23}=X_{d+1}$, $\ldots$ ,
%$f_{2,n-d+1}=X_{d+1}$, $f_{2,n-d+2}=X_{d-1}$, $\ldots$,
%$f_{1q}=X_{d-1}$, $f_{2q}=X_{d-1}$, $f_{3q}=X_{d-1}X_{d+2}$,
%$\ldots$, $f_{q-1,q}=X_{d-1}X_{n}$. Then we obtain $[f_{ij},
%f_{hl}]=1$ for $i<j$, $h<l$, $i\neq h$, $j\neq l$ with $i,j,h,l \in
%\{1,\ldots,q \}$.\\
%The thesis  follows by \cite{HRT} (Prop. 1.7).\\
\end{proof}

\begin{Expl}  $A=K[X_1,X_2,X_3,X_4]$, $I=(X_1X_2, X_1X_3, X_1X_4, X_2X_3,
X_2X_4)$.\\
Set $f_1=X_1X_2$, $f_2=X_1X_3$,  $f_3=X_1X_4$, $f_4=X_2X_3$,
$f_5=X_2X_4$.\\ $G=\{f_{ij}T_j - f_{ji}T_i | \ \ 1\leq i< j \leq 5
\}$ is not a Gr\"{o}bner basis for $J$. In fact $J$ does not admit a
linear Gr\"{o}bner basis $BG(J)$ for any term order in $A[T_1,
\ldots,T_5]$: $BG(J)= \{X_2T_4 - X_3T_5, X_1T_2 - X_3T_5, X_2T_3 -
X_4T_5, X_1T_1 - X_4T_5, X_3T_3 - X_4T_4,$\\ $X_3T_1 - X_4T_2,
X_4T_2T_3 - X_4T_1T_4\}$ \cite{CNR}. Hence $f_1, \ldots, f_5$ is not an
$s$-sequence.\\
\end{Expl}

\begin{Rem}\rm{If $G(I) = M^s_d$, then $I=\mathcal{I}_d$, $2\leq d \leq n$, where $\mathcal{I}_d$ is the
Veronese  ideal of $A$ generated by all the squarefree monomials of
degree $d$ in the variables $X_1,\dots, X_n$. The ideal
$\mathcal{I}_d$ is generated by an $s$-sequence if and only if
$d=n-1$ (\cite{LR}, Theorem 2.3). Hence $I=(M^s_d)$ is generated by
an $s$-sequence if and only if $d=n-1$.}\\
\end{Rem}

\begin{Prop}\label{P1}
Let $I\varsubsetneq A$ be a squarefree lexsegment ideal generated in
degree $d$ such that $|G(I)|\leq n-d+2$. Then the annihilator ideals
of the sequence of the monomial generators of $I$ are:$$ I_1=(0), \
\ I_i=(X_d, \ldots, X_{d+i-2}) \ \ for \ \ i=2, \ldots,n-d+1,$$
$$I_i=(X_{d-1}) \ \ for \ \ i=n-d+2.$$
\end{Prop}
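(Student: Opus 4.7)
The plan is to prove the result by a direct computation of the colons $f_{ji}=f_j/[f_j,f_i]$ using the explicit combinatorial descriptions of the generators supplied by Characterization \ref{char}, together with formulas (\ref{degree1}) and (\ref{degree2}). By Proposition \ref{samedeg} and Remark \ref{ASLI}, the hypothesis $|G(I)|\leq n-d+2$ forces $I$ to be either an USLI or an AUSLI generated in degree $d$; I would split the proof along these two cases and recall that the annihilator ideal of the $i$-th generator is $I_i=(f_{1i},f_{2i},\ldots,f_{i-1,i})$.

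First I would handle the USLI subcase $q:=|G(I)|\leq n-d+1$. By (\ref{degree1}) one may write $f_i=X_1X_2\cdots X_{d-1}X_{d+i-1}$ for $i=1,\ldots,q$. Hence for $i\neq j$ the gcd $[f_i,f_j]$ equals $X_1X_2\cdots X_{d-1}$, so $f_{ji}=f_j/[f_j,f_i]=X_{d+j-1}$. Listing these colons for $j=1,\ldots,i-1$ immediately gives $I_i=(X_d,X_{d+1},\ldots,X_{d+i-2})$, with $I_1=(0)$ by convention.

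Next I would handle the AUSLI subcase, namely $q=n-d+2$. Here the first $n-d+1$ generators still have the shape $f_j=X_1\cdots X_{d-1}X_{d+j-1}$ by (\ref{degree2}), so the computation of $I_i$ for $i\leq n-d+1$ is identical to the USLI case and yields $I_i=(X_d,\ldots,X_{d+i-2})$. The remaining work is the annihilator $I_{n-d+2}$, where $f_{n-d+2}=X_1\cdots X_{d-2}X_dX_{d+1}$. I would compute $f_{j,n-d+2}=f_j/[f_j,f_{n-d+2}]$ for each $j=1,\ldots,n-d+1$: for $j=1,2$ the greatest common divisor equals $f_j/X_{d-1}$, giving $f_{j,n-d+2}=X_{d-1}$; for $3\leq j\leq n-d+1$ the greatest common divisor equals $X_1\cdots X_{d-2}$, giving $f_{j,n-d+2}=X_{d-1}X_{d+j-1}$, a multiple of $X_{d-1}$. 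Therefore the ideal generated by these colons collapses to $(X_{d-1})$, as claimed.

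The only mildly delicate step is the book-keeping in the AUSLI case when computing $[f_j,f_{n-d+2}]$ for small $j$: one must notice that $X_{d-1}$ divides $f_j$ but not $f_{n-d+2}$, whereas for $j\geq 3$ the variable $X_{d+1}$ ceases to divide $f_j$. Once these two ``boundary'' indices $j=1,2$ are treated separately, the rest of the verification is a routine list of monomials and the proposition follows without the need for any Gröbner-basis machinery.
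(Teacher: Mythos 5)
Your proposal is correct and follows essentially the same route as the paper: both arguments compute the colons $f_{ji}=f_j/[f_j,f_i]$ directly from the explicit descriptions (\ref{degree1}) and (\ref{degree2}) of the generators and read off $I_i=(f_{1i},\ldots,f_{i-1,i})$, with the only case analysis occurring at the last generator of an AUSLI. Your treatment of the boundary indices $j=1,2$ versus $j\geq 3$ in computing $I_{n-d+2}$ is exactly the computation the paper performs (and in fact makes explicit a small typo in the paper's list, where $X_{d-1}X_{d+2}$ appears as $X_{d-1}X_{d-2}$).
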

\begin{proof}  Set $|G(I)|=q$. Let $I= (f_1,\ldots,f_q)$  with $f_1> \ldots
> f_q$. \\
Set $f_{ij}= \displaystyle{\frac{f_i}{[f_i,f_j]}}$ for $i < j$,
$i,j\in [q]$. Then the annihilator ideals of the monomial sequence
$f_1,\ldots,f_q$ are $I_i=(f_{1i},f_{2i}, \ldots, f_{i-1,i})$, for
$i\in [q]$. \\ For $i=1$,  $I_1=(0)$ and by the  structure of these
monomials, it follows:
\[I_2=(f_{12})=(X_{d}),\,\, I_3=(f_{13},
f_{23})=(X_{d}, X_{d+1}),\,\, \ldots,\,\,\]
\[I_{n-d+1}=(f_{1,n-d+1},f_{2,n-d+1}, \ldots,f_{n-d,n-d+1})=(X_{d},
X_{d+1}, \ldots, X_{n-1})\] and \[I_{n-d+2}=(f_{1,n-d+2},  \ldots,
f_{n-d+1,n-d+2})=(X_{d-1},X_{d-1}X_{d-2},\ldots,
X_{d-1}X_{n})=(X_{d-1}).\] Hence the assert follows. \end{proof}

\begin{Rem}
If $I$ is an USLI of $A$ generated in degree $d$, then $I$ is generated by a strong
$s$-sequence. In fact $I_1 \varsubsetneq I_2 \varsubsetneq \cdots
\varsubsetneq I_{n-d+1}$ for $|G(I)|< n-d+2$.
\end{Rem}

\begin{Thm}\label{inv:USLI}
Let $I\varsubsetneq A$ be an USLI generated in degree $d$. Then
\begin{enumerate}
\item $\dim (Sym_A(I)) = n+1$;
\item  $\e(Sym_A(I))=|G(I)|$;
\item $\reg_A (Sym_A(I))= 1$;
\item $\depth_A (Sym_A(I)) = n+1$.
\end{enumerate}
\end{Thm}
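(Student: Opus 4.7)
The plan is to exploit the strong $s$-sequence structure of $I$ and to compute all four invariants via the initial ideal $L=\operatorname{in}_<(J)$ of the presentation ideal $J$ of $\Sym_A(I)$. Setting $q=|G(I)|$, Proposition~\ref{samedeg} gives $q\le n-d+1$, so Theorem~\ref{T*} together with the Remark preceding Theorem~\ref{inv:USLI} guarantees that $f_1,\ldots,f_q$ is a strong $s$-sequence. Proposition~\ref{P1} then provides $I_1=(0)$ and $I_i=(X_d,\ldots,X_{d+i-2})$ for $2\le i\le q$, so in $S=A[T_1,\ldots,T_q]$ (with the standard grading $\deg X_j=\deg T_i=1$) one has $L=\sum_{i=2}^{q}I_iT_i$.

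For (1) and (2) I would identify the minimal primes of $L$. Since $I_1\subseteq\cdots\subseteq I_q$, a direct case analysis on ``$T_i\in P$ or $I_i\subseteq P$'' shows that the minimal primes are exactly
\[
P_j=(X_d,\ldots,X_{d+j-2},\,T_{j+1},\ldots,T_q),\qquad j=1,\ldots,q,
\]
each generated by $q-1$ variables, and $L=\bigcap_{j=1}^{q}P_j$ holds by the following monomial check: if $m\in\bigcap_j P_j$, with $\alpha=\min\{k:X_{d+k}\mid m\}$ and $\beta=\max\{i:T_i\mid m\}$, then inspecting $j=1$, $j=q$, and $j=\alpha+1$ forces $\beta\ge\alpha+2$, so $X_{d+\alpha}T_\beta\in L$ divides $m$. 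Thus $\dim S/L=n+1$ and $\e(S/L)=q$, and since $S/J$ and $S/\operatorname{in}_<(J)$ share the same Hilbert series one obtains $\dim\Sym_A(I)=n+1$ and $\e(\Sym_A(I))=q$, proving (1) and (2).

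For (3) and (4) I would show that $S/L$ is Cohen-Macaulay with $\reg(S/L)=1$ and then transfer to $\Sym_A(I)$. The Stanley-Reisner complex $\Delta$ of $L$ has facets equal to the complements $F_1,\ldots,F_q$ of the $P_j$, and a short computation shows $F_j\cap\bigcup_{i<j}F_i=F_j\cap F_{j-1}$, a codimension-one face of $F_j$ with restriction $\mathcal{R}(F_j)=\{T_j\}$ for $j\ge 2$ (and $\mathcal{R}(F_1)=\emptyset$). Hence $F_1,\ldots,F_q$ is a shelling of $\Delta$, so $S/L$ is Cohen-Macaulay with $h$-polynomial $1+(q-1)t$, yielding $\reg(S/L)=1$. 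Combining the standard inequalities $\depth(S/J)\ge\depth(S/\operatorname{in}_<(J))$ and $\reg(S/J)\le\reg(S/\operatorname{in}_<(J))$ with the bounds $\depth\le\dim=n+1$ and $\reg\ge 1$ (the latter holding because $J\ne 0$ is generated in degree $2$, so $\beta_{1,2}(S/J)\ne 0$) then forces $\depth\Sym_A(I)=n+1$ and $\reg\Sym_A(I)=1$. The main obstacle is the shellability verification and the identification of the $h$-polynomial; equivalently one can invoke the Herzog--Hibi criterion for Cohen-Macaulay bipartite edge ideals together with the Corso--Nagel linear-resolution theorem for Ferrers ideals to reach the same conclusion.
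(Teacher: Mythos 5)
Your proof is correct, but it follows a genuinely different route from the paper's. The paper obtains all four invariants by citing general results on (strong) $s$-sequences: Tang's Theorem 4.8 gives $\dim \Sym_A(I)=\dim A+1$ and the lower bound $\depth \Sym_A(I)\geq \dim A+1$, the Herzog--Restuccia--Tang multiplicity formula $\e(\Sym_A(I))=\sum_i \e(A/I_i)$ gives (2) since each $A/I_i$ has multiplicity $1$, and the regularity is squeezed between Tang's upper bound $\reg\bigl(S/(I_1T_1,\dots,I_qT_q)\bigr)\leq \max_i\{\sum_{j<i}\deg X_j-(i-2)\}=1$ and the trivial lower bound $1$. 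You instead work directly with the initial ideal $L=\operatorname{in}_<(J)=\sum_{i\geq 2}I_iT_i$: your identification of the minimal primes $P_j$, the verification that $L=\bigcap_j P_j$, and the shelling argument are all correct (the squarefreeness of $L$ makes the length-one localizations automatic), and they yield the stronger conclusion that $S/L$ itself is Cohen--Macaulay with $h$-polynomial $1+(q-1)t$, from which (1)--(4) follow by the standard Gr\"obner degeneration inequalities. What your approach buys is self-containedness (no appeal to Tang's theorem or the HRT multiplicity formula beyond the $s$-sequence identification of $\operatorname{in}_<(J)$) and extra structural information about the degeneration; what the paper's approach buys is brevity. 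One shared caveat: both your lower bound $\reg\geq 1$ and the paper's rely on $J\neq 0$, so part (3) implicitly assumes $|G(I)|\geq 2$ (for a principal $I$ the symmetric algebra is a polynomial ring of regularity $0$); this is a defect of the statement rather than of your argument.
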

\begin{proof}
(1) $I$ is generated by  a strong $s$-sequence. Hence, by \cite{T}
(Theorem 4.8), $Sym_A(I)$ has  dimension $\dim (A) + 1=n+1$.\\
(2) Let $|G(I)|=q$. By \cite{HRT} (Proposition 2.4) we have
$\e(Sym_A(I))= \sum_{i=1}^q \e(A/I_{i})$. By Proposition \ref{P1},
the annihilator ideals $I_i$ are generated by a regular sequence,
then, by \cite{T} (Theorem 4.8), $\e(A/I_i)=1$, for $i=2,\ldots,q$
and $\e(A/(0))=1$. Hence
 $\e(Sym_A(I))=\sum_{i=1}^q\e(A/I_i)=q$.\\
(3) By \cite{T} (Theorem 4.8):
\begin{eqnarray*}
% \nonumber to remove numbering (before each equation)
  \reg (Sym_A(I)) &=& \reg_A (A[T_1,\ldots,T_{q}]/J) \\
   &\leq &  \reg_A (A[T_1,\ldots,T_{q}]/in_{<}(J)) = \reg_A (A[T_1,\ldots,T_{q}]/(I_1T_1,\ldots, I_qT_q)) \\
   &\leq & \max_{2\leq i\leq q} \{\sum_{j=1}^{i-1} \deg X_j- (i-2)\}=(i-1)-(i-2)=1,
%   &=& (i-1)-(i-2)=1
\end{eqnarray*}
for $i=2, \ldots, q$.
%$\reg Sym_R(I)=\reg R[T_1,\ldots,T_{q}]/J \leq
%\reg R[T_1,\ldots,T_{q}]/in_{<}(J)=$\\
%$=\reg R[T_1,\ldots,T_{q}]/(I_1T_1,\ldots, I_qT_q)  \leq
%\max_{2\leq i\leq q} \{\sum_{j=1}^{i-1} \deg X_j- (i-2)
%\}=(i-1)-(i-2)=1$ for $i=2, \ldots, q$ .
Since $J$ is generated by the linear forms of degree two $X_iT_j -
X_jT_i$, for $i,j=1, \ldots,q$, then $\reg_A (A[T_1,\ldots,T_{q}]/J)
\geq 1$. It follows that
$\reg_A (Sym_A(I))=1$.\\
(4) $\depth_A (Sym_A(I))\geq \dim(A) +1 = n+1$ (\cite{T}, Theorem 4.8).\\
Since $\depth_A (Sym_A(I)) \leq \dim Sym_A(I)) = n+1$, the assert
follows.
\end{proof}

\begin{Rem}
If $I$ is an USLI of $A$ generated in degree $d$, then $Sym_A(I)$ is Cohen-Macaulay.
\end{Rem}

\begin{Thm} \label{inv:ASLI}
Let $I\varsubsetneq A$ be an AUSLI generated in degree $d$. Then
\begin{enumerate}
\item $\dim (Sym_A(I)) = n+1$.
\item $\e(Sym_A(I))=2|G(I)| - 2$.
\end{enumerate}
\end{Thm}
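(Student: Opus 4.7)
The plan is to combine Theorem \ref{T*}, which guarantees that the ordered generators of the AUSLI $I$ form an $s$-sequence with $q = |G(I)| = n-d+2$ (Remark \ref{ASLI}), together with Proposition \ref{P1}, which gives the annihilator ideals $I_1=(0)$, $I_i = (X_d, \ldots, X_{d+i-2})$ for $i=2,\ldots,q-1$, and $I_q = (X_{d-1})$. As a consequence, $\Sym_A(I) \cong A[T_1, \ldots, T_q]/J$ has the same Hilbert function as $A[T]/\textit{in}_{<}(J)$, where $\textit{in}_{<}(J) = (I_1 T_1, \ldots, I_q T_q)$ is a squarefree monomial ideal. Both the Krull dimension and the multiplicity can then be read off the Stanley--Reisner ring $A[T]/\textit{in}_{<}(J)$.

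For (1), every minimal prime of $\textit{in}_{<}(J)$ is of the form $P_U = (T_i : i \in \{2,\ldots,q\}\setminus U) + (X_j : j \in \bigcup_{i \in U} A_i)$ for some $U\subseteq\{2,\ldots,q\}$, where $A_i = \{d,\ldots,d+i-2\}$ for $i<q$ and $A_q=\{d-1\}$. A direct count gives $\dim A[T]/P_U = n+1+|U|-|\bigcup_{i\in U}A_i|$, and the nesting $A_2\subset\cdots\subset A_{q-1}$, combined with the disjointness of $A_q$ from the rest, forces $|U|\le |\bigcup_{i\in U}A_i|$ in every case. Hence $\dim \Sym_A(I)\le n+1$; equality holds since $U=\emptyset$ gives the minimal prime $(T_2,\ldots,T_q)$ of codimension $q-1$.

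For (2), since $\textit{in}_{<}(J)$ is squarefree, $A[T]/\textit{in}_{<}(J)$ is a reduced Stanley--Reisner ring whose multiplicity equals the number of minimal primes of maximum dimension $n+1$. Such primes are exactly the $P_U$ with $|U|=|\bigcup_{i\in U}A_i|$ and with $P_U$ itself minimal. The nesting and disjointness pattern of the $A_i$ forces $U\cap\{2,\ldots,q-1\}$ to be either empty or an initial segment $\{2,\ldots,t\}$ with $t\in\{2,\ldots,q-1\}$. Crossing these options with whether $q\in U$ yields four families: $U=\emptyset$, $U=\{q\}$, $U=\{2,\ldots,t\}$, and $U=\{2,\ldots,t,q\}$, for a total of $1+1+(q-2)+(q-2)=2q-2=2|G(I)|-2$ minimal primes of maximum dimension, as claimed.

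The main obstacle is the combinatorial classification in (2): pinning down exactly which subsets $U$ yield minimal primes of maximum dimension. The extra factor of $2$ relative to the USLI multiplicity $|G(I)|$ in Theorem \ref{inv:USLI} traces back to the annihilator ideal $I_q=(X_{d-1})$, which is not comparable with the preceding $I_i$'s and so introduces a second, independent binary choice (whether or not $q\in U$), thereby doubling the number of admissible configurations and producing the $2|G(I)|-2$ count.
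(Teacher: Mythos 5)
Your argument is correct, and at its combinatorial core it coincides with the paper's: both reduce to Proposition \ref{P1} and then enumerate exactly the same $2q-2$ admissible index sets (your families $U=\{2,\dots,t\}$ and $U=\{2,\dots,t\}\cup\{q\}$ are the paper's chains $I_1+\cdots+I_r$ and $I_1+\cdots+I_{r-1}+I_q$). The genuine difference is in how the dimension and multiplicity of $\Sym_A(I)$ are expressed in terms of the annihilator ideals: the paper invokes \cite{HRT} (Proposition 2.4) as a black box, together with \cite{HK} and the Huneke--Miller formula \cite{HM} to see that each contributing quotient $A/H$ has multiplicity $1$, whereas you re-derive these formulas directly by passing to the initial ideal $(I_1T_1,\dots,I_qT_q)$ of $J$ --- legitimate because the generators form an $s$-sequence by Theorem \ref{T*} and Remark \ref{ASLI} --- and reading dimension and multiplicity off the minimal primes of the resulting squarefree monomial ideal. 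Your route is more self-contained, avoids the Cohen--Macaulay/pure-resolution detour entirely, and makes transparent why the factor $2$ appears (the incomparability of $I_q=(X_{d-1})$ with the chain $I_2\subsetneq\cdots\subsetneq I_{q-1}$ yields an independent binary choice); the cost is that you must carry out the Stanley--Reisner bookkeeping yourself (distinct $U$ give distinct $P_U$, maximal-dimensional $P_U$ are automatically minimal, and each localization at such a prime has length one since the ideal is radical), all of which you handle correctly. One point worth making explicit in a final write-up: the identification of the Hilbert functions of $A[T_1,\dots,T_q]/J$ and $A[T_1,\dots,T_q]/\textit{in}_{<}(J)$ requires $J$ to be homogeneous for the standard grading, which holds precisely because all generators of $I$ have the same degree $d$, so that each $g_{ij}=f_{ij}T_j-f_{ji}T_i$ is homogeneous.
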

\begin{proof} %Let $|G(I)|=q= n-d+2$.\\
(1) By  \cite{HRT} (Proposition 2.4), we have $\dim (Sym_A(I))=
\textrm{max} \{ \dim (A/(I_{i_1} + \ldots + I_{i_r})) + r \},$ for
$1\leq i_1 < \cdots < i_r \leq n-d+2$. \\Hence, by Proposition
\ref{P1}: \[\dim (Sym_A(I))=  (n-r+ 1) + r = n+1. \]
%for $1 \leq i_1 < \cdots < i_r \leq n-d+2$.\\
(2) We have that $|G(I)|=n-d+2$.\\ From \cite{HRT} (Proposition 2.4),
  \[\e(Sym_A(I))= \sum_{1\leq i_1 < \cdots < i_r
\leq n-d+2} \e(A/(I_{i_1}+\ldots+ I_{i_r}))\]  with $\dim
(A/(I_{i_1}+\ldots + I_{i_r}))=\dim (Sym_A(I))-r = n+1-r$,  $1\leq r \leq n-d+2$.\\
By Proposition \ref{P1}, $I_i=(X_d, \ldots, X_{d+i-2})$, for $i=2,
\ldots,n-d+1$,
$I_i=(X_{d-1})$ for $i=n-d+2$. \\
Set  $H=I_{i_1}+\ldots+ I_{i_r}$. Hence $A/H$ is Cohen-Macaulay and
has a linear resolution with projective dimension equal to the
number of the generators of $H$ (\cite{HK}). Then $e(A/H)=1$, by
Huneke-Miller formula (\cite{HM}).\\
Set $|G(I)|=n-d+2=q$ and $d'=\dim (A/H)=n+1-r$. Then $\e(Sym_A(I))$
is given by the sum of the following
terms:\\
$\e(A/(0))=1$, for $r=1$ and $d'=n$,\\
$\e(A/(I_1+I_2))=1$, $\e(A/(I_1+I_q))=1$, for $r=2$ and $d'=n-1$,\\
$\e(A/(I_{1}+ I_{2}+I_3))=1$, $\e(A/(I_{1}+ I_{2}+I_q))=1$, for $r=3$ and $d'=n-2$,\\
$\e(A/(I_{1}+I_{2}+I_{3}+I_4))=1$, $\e(A/(I_{1}+I_{2}+I_{3}+I_q))=1$, for $r=4$ and $d'=n-3$,\\
and so on up to\\
$\e(A/(I_{1}+I_{2}+\ldots + I_{q-1}))=1$,  $\e(A/(I_{1}+\ldots +I_{q-2}+I_q))=1$, for $r=n-d+1$ and $d'=d-2$,\\
$\e(A/(I_1+I_2+\cdots +I_q))=1$, for $r=n-d+2$ and $d'=d-1$.\\
 Hence
 \begin{eqnarray*}
 % \nonumber to remove numbering (before each equation)
   \e(Sym_A(I)) &=& e(A/(0)) + 2 e(A/(I_1+I_2))+ \cdots + 2
e(A/(I_{1}+I_{2}+\ldots + I_{q-1}))\\
    &&+  e(A/(I_1+I_2+ \cdots +I_q))=
2(q-2)+2=2q-q.
 \end{eqnarray*}
%\[\e(Sym_R(I))= e(R/(0)) + 2 e(R/(I_1,I_2))+ \cdots + 2
%e(R/(I_{1},I_{2},\ldots , I_{q-1})) + e(R/(I_1,I_2, \cdots ,I_q))=
%2(q-2)+2=2q-q.\]
\end{proof}

%\section*{Acknowledgements}

\end{document}